\begin{document}
\title[3D Navier-Stokes equations]
{Existence, uniqueness and smoothness of a solution for 3D Navier-Stokes
equations with any smooth initial velocity. A priori estimate of this solution}

\author[A. Tsionskiy, M. Tsionskiy] 
{Arkadiy Tsionskiy, Mikhail Tsionskiy}

\subjclass[2010]{35Q30, 76D05.  
\newline
\newline
\textbf {Corresponding Authors address Email:} amtsionsk@gmail.com}

\begin{abstract}
 Solutions of the Navier-Stokes and Euler equations with initial conditions
 for 2D and 3D cases were obtained in the form of converging series, by
 an analytical iterative method using Fourier and Laplace transforms 
 \cite{TT10,TT11}.
 There the solutions are infinitely differentiable functions, and for
 several combinations of parameters numerical results are presented.
 The article provides a detailed proof of the existence, uniqueness and
 smoothness of the solution of the Cauchy problem for the 3D Navier-Stokes
 equations with any smooth initial velocity. When the viscosity tends to
 zero, this proof applies also to the Euler equations. A priori estimate 
 of this solution is presented.
\newline
\newline

\textbf {Keywords:} 3D Navier-Stokes equations; Fourier transform; Laplace transform; Schwartz functions.

\end{abstract}

\maketitle
\numberwithin{equation}{section}
\newtheorem{theorem}{Theorem}[section]
\newtheorem{remark}[theorem]{Remark}
\newtheorem{lemma}{Lemma} [section]
\allowdisplaybreaks

\section{Introduction}

Many authors have obtained results regarding the Euler
and Navier-Stokes equations.
Existence and smoothness of solution for the Navier-Stokes equations
in two dimensions have been known for a long time.
Leray (1934) showed that the Navier-Stokes equations in three dimensional
space have a weak solution. Scheffer (1976, 1993) and Shnirelman (1997)
obtained weak solution of the Euler equations with compact support in
space-time.  Caffarelli, Kohn and Nirenberg (1982) improved Scheffer's results,
and  Lin (1998) simplified the proof of the results by Leray.
Many problems and conjectures about behavior of weak solutions of the
Euler and Navier-Stokes equations are described in the books by
Ladyzhenskaya (1969), Temam (1977), Constantin (2001), Bertozzi and
Majda (2002), and Lemari\'e-Rieusset (2002).

The solution of the Cauchy problem for the 3D Navier-Stokes equations
is described in this article.
We will consider an initial velocity that is infinitely differentiable
and decreasing rapidly to zero in infinity. The applied force is assumed to be
identically zero. A  solution of the problem will be presented in the
following stages:

\textbf{First stage} (section 2).  We  move the non-linear parts of
 equations to the right side.
Then in section 4 we solve the system of linear partial differential equations with
constant coefficients. 

\textbf{Second stage} (section 3).  We  introduce perfect spaces of
functions and vector-functions (Gel'fand, Shilov \cite{GC68}),
in which we  look for the solution of the problem. We show the properties of the
direct and inverse Fourier transform for these functions.

\textbf{Third stage} (section 4, 5). We obtaine the solution of this system using
Fourier transforms for the space coordinates and Laplace transform for time.

From theorems about applications of Fourier and Laplace transforms, for system
of linear partial differential equations with constant coefficients, we
see that in this case if initial velocity and applied force are smooth enough
functions decreasing in infinity, then the solution of such system is also
a smooth function. Corresponding theorems are presented in  Bochner \cite{SB59},
 Palamodov \cite{VP70},  Shilov \cite{gS01},  Hormander \cite{LH83},
 Mizohata \cite{SM73},  Treves \cite{JFT61}.
The result of this stage is an integral equation for the vector-function
of velocity.

We  demonstrate the equivalence of solving the Cauchy problem
in differential form and in the form of an integral equation.

\textbf{Fourth stage} (section 6).
The properties of the matrix integral operators of the integral equation 
were obtained.

\textbf{Fifth stage} (section 7).
A priori estimate of the solution is presented by using the properties of  the matrix integral operators and the
direct and inverse Fourier transform.

\textbf{Sixth stage} (section 8).
The existence and uniqueness of the solution of
the Cauchy problem for the 3D Navier-Stokes equations is proved through the development of the ideas and approaches used to obtain a priori estimate of the solution.

\textbf{Seventh stage} (section 8).
By using a priori estimate of the
solution of the Cauchy problem for the 3D Navier-Stokes equations
\cite{oL69,LK63}, we show that the energy of the whole process has
a finite value for any $t$ in $[0,\infty)$.

\section{Mathematical setup}

The Navier-Stokes equations describe the motion of a fluid in
 $\mathbb{R}^N$ ($N = 3$). We look for a viscous incompressible fluid
filling all of $\mathbb{R}^N$  here. The Navier-Stokes equations are
then given by
\begin{gather}\label{eqn1}
\frac{\partial u_k}{\partial t} + \sum_{n=1}^N u_n
\frac{\partial u_k}{\partial x_n} = \nu\Delta u_k
- \frac{\partial p}{\partial x_k} +f_k(x,t)\quad
x\in \mathbb{R}^N,\;t\geq 0,\; 1\leq k \leq N\,, \\
\label{eqn2}
\operatorname{div}\vec{u}= \sum_{n=1}^N \frac{\partial u_n}{\partial x_n}
 = 0\quad  x\in \mathbb{R}^N,\;t\geq 0\,,
\end{gather}
with initial conditions
\begin{equation}\label{eqn3}
\vec{u}(x,0)= \vec{u}^0(x)\quad x\in \mathbb{R}^N\,.
\end{equation}
Here $\vec{u}(x,t)=(u_k(x,t)) \in \mathbb{R}^N$ $(1\leq k \leq N)$
is an unknown velocity vector,  $N = 3$;
 $p(x,t)$ is an unknown pressure; $\vec{u}^0(x)$ is a given
$C^{\infty}$ divergence-free vector field; $f_k(x,t)$ are components
of a given, externally applied force $\vec{f}(x,t)$; $\nu$ is a positive
 coefficient of the viscosity
(if $\nu = 0$ then
 \eqref{eqn1}--\eqref{eqn3} are the Euler equations);
and $ \Delta= \sum_{n=1}^N \frac{\partial^2}{\partial x_n^2}$
is the Laplacian in the space variables.
Equation \eqref{eqn1} is Newton's law for a fluid element.
Equation \eqref{eqn2} says that the fluid is incompressible.
For physically reasonable solutions, we accept
\begin{equation}\label{eqn4}
u_k(x,t) \to 0, \quad
\frac{\partial u_k}{\partial x_n} \to 0\quad
\text{as }| x | \to  \infty\quad 1\leq k \leq N ,\;  1\leq n \leq N\,.
\end{equation}
Hence, we will restrict our attention to initial conditions
 $\vec{u}^0$ and force $\vec{f}$ that satisfy
\begin{equation}\label{eqn5}
|\partial_{x}^{\alpha}\vec{u}^0(x)|\leq C_{\alpha K}(1+| x |)^{-K}
\quad \text{on $\mathbb{R}^N$ for any $\alpha$ and any $K$}.
\end{equation}
and
\begin{equation}\label{eqn6}
|\partial_{x}^{\alpha}\partial_{t}^{\beta}\vec{f}(x,t)|
\leq C_{\alpha \beta K}(1+| x | +t)^{-K} \quad
\text{on $\mathbb{R}^N\times[0,\infty)$ for any $\alpha,\beta$  and any $K$}.
\end{equation}
\[
C_{\alpha K},\;\; C_{\alpha \beta K}\;\; - constants.
\]
To start the process of solution let us add
$- \sum_{n=1}^N u_n\frac{\partial u_k}{\partial x_n}$ to both sides of
the equations \eqref{eqn1}. Then we have
\begin{gather}\label{eqn7}
\frac{\partial u_k}{\partial t}=\nu\,\Delta\,u_k
-\frac{\partial p}{\partial x_k}+f_k(x,t)-\sum_{n=1}^N
u_n\frac{\partial u_k}{\partial x_n}\quad x\in \mathbb{R}^N,\;t\geq 0,\; 1\leq k \leq N,\\
\label{eqn8}
\operatorname{div}\vec{u}= \sum_{n=1}^N \frac{\partial u_n}{\partial x_n} = 0
\quad  x\in \mathbb{R}^N,\;t\geq 0,\\
\label{eqn9}
\vec{u}(x,0)= \vec{u}^0(x)\quad x\in \mathbb{R}^N,\\
\label{eqn10}
u_k(x,t) \to 0\quad \frac{\partial u_k}{\partial x_n}\to 0\quad\text{as }
 | x | \to  \infty\quad 1\leq k \leq N,\; 1\leq n \leq N,\\
\label{eqn11}
|\partial_{x}^{\alpha}\vec{u}^0(x)|\leq C_{\alpha K}(1+| x |)^{-K} \quad
\text{on $\mathbb{R}^N$ for  any $\alpha$ and any $K$},\\
\label{eqn12}
|\partial_{x}^{\alpha}\partial_{t}^{\beta}\vec{f}(x,t)|
\leq C_{\alpha \beta K}(1+| x | +t)^{-K} \quad
\text{on $\mathbb{R}^N\times[0,\infty)$  for  any $\alpha,\beta$  and any $K$}.
\end{gather}
Let us denote
\begin{equation}\label{eqn13}
\tilde{f}_k(x,t)= f_k(x,t)
- \sum_{n=1}^N u_n\frac{\partial u_k}{\partial x_n}\quad 1\leq k \leq N\,.
\end{equation}
We can present it in the vector form as
\begin{equation}\label{eqn14}
\vec{\tilde{f}}(x,t)= \vec{f}(x,t) -(\vec{u}\cdot \nabla)\vec{u}\,.
\end{equation}

\section{Spaces $S$ and  $\stackrel{\longrightarrow} {TS}$.\\
Fourier transforms in  Space $S$.} 

As in \cite{GC68, RR64}, we consider the space $S$ (Schwartz) of all infinitely
differentiable functions  $\varphi$(x) defined in $N$-dimensional
space $\mathbb{R}^N$ ($N = 3$), such that these functions tend to 0 as
$| x | \to  \infty$, as well as their derivatives of any order,
more rapidly than any power of $1/|x|$.

To define a topology in the space $S$ let us introduce countable system of norms
\begin{equation}\label{eqn200}
\|\varphi\|_{p}= \sup_{x}\big\{ | x^{k}D^{q} \varphi(x)| ,\, 0\leq k\leq p, \;\;
0\leq q \leq p\big\}
\quad p = 0, 1, 2,\dots,
\end{equation}
where
\begin{gather*}
| x^{k}D^{q} \varphi(x)|= | x_1^{k_1}\dots x_{N}^{k_{N}}
\frac{\partial^{q_1+\dots + q_{N}}\varphi(x)}{\partial {x_1^{q_1}}
\dots \partial {x_{N}^{q_{N}}} }|,\\
k = (k_1, \dots , k_{N}),\quad
q = (q_1, \dots , q_{N}),\quad
x^{k} =  x_1^{k_1}\dots x_{N}^{k_{N}} ,\\
D^{q} = \frac{\partial^{q_1+\dots + q_{N}}}{\partial {x_1^{q_1}}\dots
 \partial {x_{N}^{q_{N}}} },\quad  q_1, \dots , q_{N} = 0, 1, 2, \dots\,.
\end{gather*}
\begin{equation}\label{eqn201z9b}
\|\varphi\|_{0}\leq\|\varphi\|_{1}\leq\cdots\leq\|\varphi\|_{p}\cdots\\
\end{equation}
Note that $S$ is a perfect space. 
The space $\overrightarrow{TS}$ of vector-functions $\vec{\varphi}$ is a
direct sum of $N$  perfect spaces $S$ ($N = 3$) \cite{VT80}:
\[
\overrightarrow{TS} = S \oplus S \oplus S.
\]
To define a topology in the space $\overrightarrow{TS}$ let us introduce
countable system of norms
\begin{equation}\label{eqn201}
\|\vec{\varphi}\|_{p}= \sum_{i = 1}^N\|\varphi_{i}\|_{p}
 = \sum_{i = 1}^N\sup_{x}\big\{ | x^{k}D^{q} \varphi_{i}(x)|,\,
0\leq k\leq p, \;\;0\leq q \leq p\big\},
\end{equation}
\[N = 3, p = 0, 1, 2,\dots\]
\begin{equation}\label{eqn201z9c}
\|\vec{\varphi}\|_{0}\leq\|\vec{\varphi}\|_{1}\leq\cdots\leq\|\vec{\varphi}\|_{p}\cdots
\end{equation}
Let us consider the Fourier transform of the function $\varphi(x) \in S$ \cite{GC68}. 

We show that the Fourier transform of the function $\varphi(x)$   
\begin{equation}\label{eqn201z1}
F[\varphi] \equiv \psi(\sigma) \equiv \widetilde{\varphi(x)} \equiv \frac{1}{(2\pi)^{N/2}}\int_{\mathbb{R}^N} \text{\LARGE{e}}^{i(x,\sigma)}\varphi(x)dx,  \;\;\;\;\;\;\;\;
(x,\sigma) = \sum_{i = 1}^Nx_i\sigma_i,
\end{equation}
also belongs, as a function of $\sigma$, to the space $S$ (a function of $\sigma$), i.e., $\psi(\sigma)$ is  infinitely
differentiable, and each of its derivatives approaches zero more rapidly than any power of $1/|\sigma|$ as $|\sigma| \rightarrow \infty$.

The integral in  \eqref{eqn201z1} admits of differentiation with respect to the parameter $\sigma_j$, since the integral obtained after formal differentiation remains absolutely convergent:
\[
\frac{\partial\psi(\sigma)}{\partial\sigma_j} = \frac{1}{(2\pi)^{N/2}}\int_{\mathbb{R}^N} ix_j\text{\LARGE{e}}^{i(x,\sigma)}\varphi(x)dx
\]
The properties of the function $\varphi(x)$  permit this differentiation to be continued without limit. This means that \textit{the function $\psi(\sigma)$ is infinitely differentiable}. Hence, the following formula holds
\begin{equation}\label{eqn201z2}
P(D)F[\varphi(x)] \equiv P(D)\psi(\sigma) = \frac{1}{(2\pi)^{N/2}}\int_{\mathbb{R}^N} P(ix)\text{\LARGE{e}}^{i(x,\sigma)}\varphi(x)dx =   F[P(ix)\varphi(x)]
\end{equation}
for any differential operator $P(D)$:
\[
P(D) =  \sum a_kD^k = \sum a_{k_{1}...k_{n}}\frac{\partial^{k_1+\dots + k_{n}}}{\partial {\sigma_1^{k_1}}\dots
 \partial {\sigma_{n}^{k_{n}}} };
\]
similarly
\[
P(ix) = \sum a_k(ix)^k = \sum a_{k_{1}...k_{n}}(ix_1)^{k_1}\dots(ix_n)^{k_n}.
\]
Now, let us consider the Fourier transform of the partial derivative ($\partial\varphi/\partial x_j$):
\[
F\Big[\frac{\partial\varphi(x)}{\partial x_j}\Big] = \frac{1}{(2\pi)^{N/2}}\int_{\mathbb{R}^N} \frac{\partial\varphi(x)}{\partial x_j}\text{\LARGE{e}}^{i(x,\sigma)}dx.
\]
Integration by parts, taking into account that $\varphi(x)$ tends to zero as $|x| \rightarrow \infty$, leads to the expression
\[
F\Big[\frac{\partial\varphi(x)}{\partial x_j}\Big] = -i\sigma_j \frac{1}{(2\pi)^{N/2}}\int_{\mathbb{R}^N} \varphi(x)\text{\LARGE{e}}^{i(x,\sigma)}dx = -i\sigma_j F[\varphi(x)].
\]
Repeating this operation we obtain
\begin{equation}\label{eqn201z3}
F[P(D)\varphi(x)] = P(-i\sigma_j)F[\varphi(x)].
\end{equation}
As a Fourier transform of an integrable function, the function $P(-i\sigma_j)F[\varphi(x)]$ is bounded. Since $P$ is any polynomial, we see that $F[\varphi(x)] = \psi(\sigma)$ \textit{tends to zero more rapidly than any power of $1/|\sigma|$ as $|\sigma|\rightarrow \infty$}. The same is true also for any derivative of $\psi(\sigma)$ since, as we have seen, the expression $\partial\psi/\partial\sigma_j$ say, is the Fourier transform of the function $ix_j\varphi(x)$, which  also belongs to $S$.

Therefore, any derivative of $\psi(\sigma)$ tends to zero more rapidly than any power of $1/|\sigma|$ as $|\sigma| \rightarrow \infty$, Q.E.D.

Thus, \textit{if a function $\varphi(x)$ belongs to the space S (a function of $x$), then $\psi(\sigma) = F[\varphi(x)]$ also belongs to the space S (a function of $\sigma$).}

An analogous statement is proved in exactly the same manner for the inverse Fourier transform $F^{-1}$, which, as is known, is defined by the formula
\begin{equation}\label{eqn201z4}
\varphi(x) = F^{-1}[\psi(\sigma)]  = \frac{1}{(2\pi)^{N/2}}\int_{\mathbb{R}^N} \text{\LARGE{e}}^{-i(x,\sigma)}\psi(\sigma)d\sigma;
\end{equation} 
\textit{if $\psi(\sigma)$ belongs to the space S (a function of $\sigma$), then $\varphi(x) = F^{-1}[\psi(\sigma)]$ also belongs to the space S (a function of $x$).}

Let us note that by applying the operator $F^{-1}$ to \eqref{eqn201z2} and \eqref{eqn201z3}, and replacing $F[\varphi]$ everywhere by $\psi$, and $\varphi$ by $F^{-1}[\psi]$, we obtain the following formulas for the operator $F^{-1}$:
\begin{equation}\label{eqn201z5}
F^{-1}[P(D)\psi(\sigma)] = P(ix)F^{-1}[\psi(\sigma)];
\end{equation}
\begin{equation}\label{eqn201z6}
P(D)F^{-1}[\psi(\sigma)] = F^{-1}[P(-i\sigma)\psi(\sigma)].
\end{equation}
From the proved assumptions, it follows that \textit{the operators $F$ and $F^{-1}$ map the space $S$ conformally one-to-one into itself.} These operators are evidently linear.

We introduce the infinitely differentiable function:
\begin{equation}\label{eqn201aa}
\delta (\gamma_1,\gamma_2,\gamma_3) = \text{\LARGE{e}}  ^{-\frac{\displaystyle\epsilon^3}{(\gamma_1^2 + \gamma_2^2 + \gamma_3^2)}}\;,\;\; 0 < \epsilon << 1.
\end{equation}

For example $\epsilon = \text{e}^{-q_{1}}, \;\;\;q_{1} = 2, 3,4,...\;\;\;q_{1} < \infty.$

It is evident that
\begin{equation}\label{eqn201zz}
\lim_{\gamma_1,\gamma_2,\gamma_3\rightarrow 0} \frac{1}{(\gamma_1^2 + \gamma_2^2 + \gamma_3^2)^n}\cdot \text{\LARGE{e}}  ^{-\frac{\displaystyle\epsilon^3}{(\gamma_1^2 + \gamma_2^2 + \gamma_3^2)}} = 0
\end{equation}
for any $0 \leq n < \infty $.

\section{Solution of the system \eqref{eqn7}--\eqref{eqn14}}

We seek a solution of the system \eqref{eqn7}--\eqref{eqn14}:

$\vec{u}(x_1,x_2,x_3,t) \in \overrightarrow{TS}$, $p(x_1,x_2,x_3,t) \in S.$ 

$\vec{u}_0(x_1,x_2,x_3)\in \overrightarrow{TS}\;\; \text{and}\;\; \vec{\tilde{f}}(x_1,x_2,x_3,t) \in \overrightarrow{TS}$ also.

Let us assume that all operations below are valid.
The validity of these operations will be proved in the next sections.
Taking into account our substitution \eqref{eqn13} we see that
\eqref{eqn7}--\eqref{eqn9} are in fact system of linear partial
differential equations with constant coefficients.

The solution of this system will be presented by the following steps:

\textbf{First step.} We  use Fourier transform \eqref{A3} to solve equations
\eqref{eqn7}--\eqref{eqn14}.
We obtain:
\begin{gather*}
U_k( \gamma_1, \gamma_2, \gamma_3, t)=F[u_k ( x_1, x_2, x_3, t)],\\
 -\gamma^2_{s}U_k( \gamma_1, \gamma_2, \gamma_3, t)
=F[ \frac{\partial^2u_k( x_1, x_2, x_3, t)}{\partial x^2_{s}}]
\quad \text{(use \eqref{eqn10})},\\
U_k^0( \gamma_1 ,\gamma_2 ,\gamma_3)=F[u_k^0 ( x_1, x_2, x_3)],\\
P( \gamma_1, \gamma_2, \gamma_3, t)=F[p ( x_1, x_2, x_3, t)],\\
\tilde{F}_k( \gamma_1, \gamma_2, \gamma_3, t)
=F[\tilde{f}_k ( x_1, x_2, x_3, t)],
\end{gather*}
for $k,s=1,2,3$. Then
\begin{gather}\label{eqn134}
\begin{aligned}
\frac{d U_1( \gamma_1, \gamma_2, \gamma_3, t )}{d t}
& =-\nu
( \gamma_1^2 +\gamma_2^2 +\gamma_3^2) U_1( \gamma_1, \gamma_2,
\gamma_3, t )+i\gamma_1 P( \gamma_1, \gamma_2, \gamma_3, t )\\
&\quad + \tilde{F}_1( \gamma_1, \gamma_2, \gamma_3,t ),
\end{aligned} \\
\label{eqn135}
\begin{aligned}
\frac{d U_2( \gamma_1, \gamma_2, \gamma_3, t )}{d t}
&=-\nu ( \gamma_1^2 +\gamma_2^2 +\gamma_3^2) U_2( \gamma_1,
 \gamma_2, \gamma_3, t )+i\gamma_2 P( \gamma_1, \gamma_2, \gamma_3, t )\\
&\quad + \tilde{F}_2( \gamma_1, \gamma_2, \gamma_3,t ),
\end{aligned} \\
\label{eqn136}
\begin{aligned}
\frac{d U_3( \gamma_1, \gamma_2, \gamma_3, t )}{d t}
&=-\nu ( \gamma_1^2 +\gamma_2^2 +\gamma_3^2)
U_3( \gamma_1, \gamma_2, \gamma_3, t )
+i\gamma_3 P( \gamma_1, \gamma_2, \gamma_3, t )\\
&\quad + \tilde{F}_3( \gamma_1, \gamma_2, \gamma_3,t ),
\end{aligned}\\
\label{eqn137}
\gamma_1 U_1( \gamma_1, \gamma_2, \gamma_3, t )
+ \gamma_2\, U_2( \gamma_1, \gamma_2, \gamma_3, t )
+ \gamma_3\, U_3( \gamma_1, \gamma_2, \gamma_3, t ) =0,\\
\label{eqn138}
U_1(\gamma_1, \gamma_2,  \gamma_3,  0)= U_1^0(\gamma_1 ,\gamma_2 ,\gamma_3),\\
\label{eqn139}
U_2(\gamma_1, \gamma_2,  \gamma_3,  0)= U_2^0(\gamma_1 ,\gamma_2 ,\gamma_3),\\
\label{eqn140}
U_3(\gamma_1, \gamma_2,  \gamma_3,  0)= U_3^0(\gamma_1 ,\gamma_2 ,\gamma_3)\,.
\end{gather}

Hence, we have received a system of linear ordinary differential equations
with constant coefficients \eqref{eqn134}-\eqref{eqn140}
 according to Fourier transforms. At the same time the initial conditions
are set only for Fourier transforms of velocity components
$U_1( \gamma_1, \gamma_2, \gamma_3, t )$,
$U_2( \gamma_1, \gamma_2, \gamma_3, t )$,
$U_3( \gamma_1, \gamma_2, \gamma_3, t )$.
Because of that we can eliminate Fourier transform for pressure
$P( \gamma_1, \gamma_2, \gamma_3, t )$ from equations
\eqref{eqn134}--\eqref{eqn136}
on the next step of the solution process.

\textbf{Second step.}
From here assuming that $\gamma_1 \neq 0, \gamma_2 \neq 0,\gamma_3 \neq 0$

(case $\gamma_1 = \gamma_2 = \gamma_3 = 0$  will be considered later in this article),
 we eliminate $P( \gamma_1, \gamma_2, \gamma_3, t )$ from equations
$\eqref{eqn134}- \eqref{eqn136}$ and find
\begin{gather}\label{eqn141}
\begin{aligned}
&\frac{d}{dt} [ U_2( \gamma_1, \gamma_2, \gamma_3, t )
- \frac{\gamma_2}{\gamma_1} \,U_1( \gamma_1, \gamma_2, \gamma_3, t) ]\\
&= -\nu( \gamma_1^2 +\gamma_2^2 +\gamma_3^2)
[ U_2( \gamma_1, \gamma_2, \gamma_3, t )
- \frac{\gamma_2}{\gamma_1} U_1( \gamma_1, \gamma_2, \gamma_3, t) ]\\
&\quad +  [ \tilde{F}_2( \gamma_1, \gamma_2, \gamma_3, t )
-\frac{\gamma_2}{\gamma_1}  \tilde{F}_1( \gamma_1, \gamma_2, \gamma_3, t) ]\,,
\end{aligned}\\
\label{eqn142}
\begin{aligned}
&\frac{d}{dt} [  U_3( \gamma_1, \gamma_2, \gamma_3, t )
 -\frac{\gamma_3}{\gamma_1}\, U_1( \gamma_1, \gamma_2, \gamma_3, t) ] \\
&= -\nu( \gamma_1^2 +\gamma_2^2 +\gamma_3^2)
[ U_3( \gamma_1, \gamma_2, \gamma_3, t )
- \frac{\gamma_3}{\gamma_1}\, U_1( \gamma_1, \gamma_2, \gamma_3, t) ]\\
&\quad  + [ \tilde{F}_3( \gamma_1, \gamma_2, \gamma_3, t )
-\frac{\gamma_3}{\gamma_1} \tilde{F}_1( \gamma_1, \gamma_2, \gamma_3, t) ]\,,
\end{aligned}\\
\label{eqn143}
\gamma_1 U_1( \gamma_1, \gamma_2, \gamma_3, t )
+ \gamma_2\, U_2( \gamma_1, \gamma_2, \gamma_3, t )
+ \gamma_3\, U_3( \gamma_1, \gamma_2, \gamma_3, t ) =0,\\
\label{eqn144}
U_1(\gamma_1, \gamma_2,  \gamma_3,  0)= U_1^0(\gamma_1 ,\gamma_2 ,\gamma_3),\\
\label{eqn145}
U_2(\gamma_1, \gamma_2,  \gamma_3,  0)= U_2^0(\gamma_1 ,\gamma_2 ,\gamma_3),\\
\label{eqn146}
U_3(\gamma_1, \gamma_2,  \gamma_3,  0)= U_3^0(\gamma_1 ,\gamma_2 ,\gamma_3)\,.
\end{gather}

\textbf{Third step.} We use Laplace transform \eqref{A4}, \eqref{A5}
for a system of linear ordinary differential equations with constant
coefficients \eqref{eqn141}--\eqref{eqn143}  and have as a result the
system of linear algebraic equations with constant coefficients:
\begin{gather}
U_k^{\otimes} (\gamma_1, \gamma_2, \gamma_3, \eta)
=L[U_k(\gamma_1, \gamma_2, \gamma_3, t)] \quad   k=1,2,3;\\
\tilde{F}_k^{\otimes} (\gamma_1, \gamma_2, \gamma_3, \eta)
=L[\tilde{F}_k(\gamma_1, \gamma_2, \gamma_3, t)] \quad   k=1,2,3; \\
\label{eqn147}
\begin{aligned}
&\eta [  U_2^{\otimes}( \gamma_1, \gamma_2, \gamma_3, \eta )
 -\frac{\gamma_2}{\gamma_1} U_1^{\otimes}( \gamma_1, \gamma_2, \gamma_3, \eta) ]\\
&\quad - [   U_2( \gamma_1, \gamma_2, \gamma_3, 0 )
 -\frac{\gamma_2}{\gamma_1} U_1( \gamma_1, \gamma_2, \gamma_3, 0) ]  \\
&= -\nu( \gamma_1^2 +\gamma_2^2 +\gamma_3^2)[   U_2^{\otimes}
( \gamma_1, \gamma_2, \gamma_3, \eta )
 -\frac{\gamma_2}{\gamma_1} U_1^{\otimes}( \gamma_1, \gamma_2, \gamma_3, \eta) ]\\
&\quad + [   \tilde{F}_2^{\otimes}( \gamma_1, \gamma_2, \gamma_3, \eta )
 -\frac{\gamma_2}{\gamma_1} \tilde{F}_1^{\otimes}
( \gamma_1, \gamma_2, \gamma_3, \eta) ],
\end{aligned}\\
\label{eqn148}
\begin{aligned}
&\eta  [   U_3^{\otimes}( \gamma_1, \gamma_2, \gamma_3, \eta )
-\frac{\gamma_3}{\gamma_1} U_1^{\otimes}( \gamma_1, \gamma_2, \gamma_3, \eta) ]\\
&\quad - [ U_3( \gamma_1, \gamma_2, \gamma_3, 0 ) -\frac{\gamma_3}{\gamma_1}
U_1( \gamma_1, \gamma_2, \gamma_3, 0) ] \\
&= -\nu( \gamma_1^2 +\gamma_2^2 +\gamma_3^2)[ U_3^{\otimes}
 ( \gamma_1, \gamma_2, \gamma_3, \eta )
 -\frac{\gamma_3}{\gamma_1} U_1^{\otimes}( \gamma_1, \gamma_2, \gamma_3, \eta) ]\\
&\quad + [    \tilde{F}_3^{\otimes}( \gamma_1, \gamma_2, \gamma_3, \eta )
 -\frac{\gamma_3}{\gamma_1} \tilde{F}_1^{\otimes}( \gamma_1, \gamma_2, \gamma_3,
 \eta) ],
\end{aligned} \\
\label{eqn149}
\gamma_1 U_1^{\otimes}( \gamma_1, \gamma_2, \gamma_3, \eta )
+ \gamma_2\, U_2^{\otimes}( \gamma_1, \gamma_2, \gamma_3, \eta )
+ \gamma_3\, U_3^{\otimes}( \gamma_1, \gamma_2, \gamma_3, \eta ) =0,\\
\label{eqn150}
U_1(\gamma_1, \gamma_2,  \gamma_3,  0)= U_1^0(\gamma_1 ,\gamma_2 ,\gamma_3),\\
\label{eqn151}
U_2(\gamma_1, \gamma_2,  \gamma_3,  0)= U_2^0(\gamma_1 ,\gamma_2 ,\gamma_3),\\
\label{eqn152}
U_3(\gamma_1, \gamma_2,  \gamma_3,  0)= U_3^0(\gamma_1 ,\gamma_2 ,\gamma_3)\,.
\end{gather}

Let us rewrite the system of equations \eqref{eqn147}--\eqref{eqn149}
in the form
\begin{gather}\label{eqn147a}
\begin{aligned}
&[\eta +\nu( \gamma_1^2 +\gamma_2^2 +\gamma_3^2)]
 \frac{\gamma_2}{\gamma_1} U_1^{\otimes}( \gamma_1, \gamma_2, \gamma_3, \eta)\\
&\quad - [\eta +\nu( \gamma_1^2 +\gamma_2^2 +\gamma_3^2)] U_2^{\otimes}
( \gamma_1, \gamma_2, \gamma_3, \eta)  \\
&= [  \frac{\gamma_2}{\gamma_1} \tilde{F}_1^{\otimes}( \gamma_1, \gamma_2,
\gamma_3, \eta)  -\tilde{F}_2^{\otimes}( \gamma_1, \gamma_2, \gamma_3, \eta ) ]\\
&\quad + [ \frac{\gamma_2}{\gamma_1} U_1( \gamma_1, \gamma_2, \gamma_3, 0)
 -U_2( \gamma_1, \gamma_2, \gamma_3, 0 ) ],
\end{aligned} \\
\label{eqn148a}
\begin{aligned}
&[\eta +\nu( \gamma_1^2 +\gamma_2^2 +\gamma_3^2)]
 \frac{\gamma_3}{\gamma_1} U_1^{\otimes}( \gamma_1, \gamma_2, \gamma_3, \eta)\\
 &\quad - [\eta +\nu( \gamma_1^2 +\gamma_2^2 +\gamma_3^2)] U_3^{\otimes}
( \gamma_1, \gamma_2, \gamma_3, \eta)  \\
&=[  \frac{\gamma_3}{\gamma_1} \tilde{F}_1^{\otimes}( \gamma_1, \gamma_2,
\gamma_3,
 \eta)  -\tilde{F}_3^{\otimes}( \gamma_1, \gamma_2, \gamma_3, \eta ) ]\\
&\quad + [ \frac{\gamma_3}{\gamma_1} U_1( \gamma_1, \gamma_2, \gamma_3, 0)
 -U_3( \gamma_1, \gamma_2, \gamma_3, 0 ) ],
\end{aligned}\\
\label{eqn149a}
\gamma_1 U_1^{\otimes}( \gamma_1, \gamma_2, \gamma_3, \eta )
+ \gamma_2\, U_2^{\otimes}( \gamma_1, \gamma_2, \gamma_3, \eta )
 + \gamma_3\, U_3^{\otimes}( \gamma_1, \gamma_2, \gamma_3, \eta ) =0
\end{gather}

The determinant of this system is
\begin{equation}\label{eqn1481b}
\begin{aligned}
\Delta &= \begin{vmatrix}
[\eta +\nu( \gamma_1^2 +\gamma_2^2 +\gamma_3^2)]\frac{\gamma_2}{\gamma_1}
& -[\eta +\nu( \gamma_1^2 +\gamma_2^2 +\gamma_3^2)] & 0 \\
 [\eta +\nu( \gamma_1^2 +\gamma_2^2 +\gamma_3^2)]\frac{\gamma_3}{\gamma_1}
& 0 & -[\eta +\nu( \gamma_1^2 +\gamma_2^2 +\gamma_3^2)] \\
\gamma_1 & \gamma_2 & \gamma_3 \end{vmatrix} \\
& = \frac{[\eta +\nu( \gamma_1^2 +\gamma_2^2
+\gamma_3^2)]^2( \gamma_1^2 +\gamma_2^2
+\gamma_3^2)}{\gamma_1} \neq  0\,.
\end{aligned}
\end{equation}
Consequently the system of equations \eqref{eqn147}--\eqref{eqn149}
 and/or \eqref{eqn147a}--\eqref{eqn149a} has a unique solution.
Taking into account formulas \eqref{eqn150}--\eqref{eqn152}
 we can write this solution in the form
\begin{gather}\label{eqn153}
\begin{aligned}
&U_1^{\otimes}( \gamma_1, \gamma_2, \gamma_3, \eta )\\
&=\frac{[( \gamma_2^2 +\gamma_3^2)  \tilde{F}_1^{\otimes}
( \gamma_1, \gamma_2, \gamma_3, \eta)
- \gamma_1\gamma_2 \tilde{F}_2^{\otimes}( \gamma_1, \gamma_2, \gamma_3, \eta)
- \gamma_1\gamma_3 \tilde{F}_3^{\otimes}( \gamma_1, \gamma_2, \gamma_3, \eta)]}
{ (\gamma_1^2 +\gamma_2^2 +\gamma_3^2)
 [\eta+\nu (\gamma_1^2 +\gamma_2^2 +\gamma_3^2)] }\\
&\quad + \frac{ U_1^0(\gamma_1 , \gamma_2 , \gamma_3)}
{[\eta+\nu (\gamma_1^2 +\gamma_2^2 +\gamma_3^2)] }\,,
\end{aligned}\\
\label{eqn154}
\begin{aligned}
&U_2^{\otimes}( \gamma_1, \gamma_2, \gamma_3, \eta )\\
&=\frac{[( \gamma_3^2 +\gamma_1^2)  \tilde{F}_2^{\otimes}
 ( \gamma_1, \gamma_2, \gamma_3, \eta) - \gamma_2\gamma_3 \tilde{F}_3^{\otimes}
( \gamma_1, \gamma_2, \gamma_3, \eta) - \gamma_2\gamma_1 \tilde{F}_1^{\otimes}
 ( \gamma_1, \gamma_2, \gamma_3, \eta)]}{ (\gamma_1^2 +\gamma_2^2
 +\gamma_3^2) [\eta+\nu (\gamma_1^2 +\gamma_2^2 +\gamma_3^2)] }\\
&\quad + \frac{ U_2^0(\gamma_1 , \gamma_2 , \gamma_3)}{[\eta+\nu (\gamma_1^2
+\gamma_2^2 +\gamma_3^2)] }\,,
\end{aligned} \\
\label{eqn155}
\begin{aligned}
&U_3^{\otimes}( \gamma_1, \gamma_2, \gamma_3, \eta )\\
&=\frac{[( \gamma_1^2 +\gamma_2^2)
\tilde{F}_3^{\otimes}( \gamma_1, \gamma_2, \gamma_3, \eta)
- \gamma_3\gamma_1 \tilde{F}_1^{\otimes}( \gamma_1, \gamma_2, \gamma_3, \eta)
 - \gamma_3\gamma_2 \tilde{F}_2^{\otimes}( \gamma_1, \gamma_2, \gamma_3, \eta)]}
{ (\gamma_1^2 +\gamma_2^2 +\gamma_3^2)
 [\eta+\nu (\gamma_1^2 +\gamma_2^2 +\gamma_3^2)] }\\
&\quad + \frac{ U_3^0(\gamma_1 , \gamma_2 , \gamma_3)}
{[\eta+\nu (\gamma_1^2 +\gamma_2^2 +\gamma_3^2)] }\,.
\end{aligned}
\end{gather}
Then we use the convolution theorem with the convolution formula
\eqref{A6} and integral \eqref{A7} for \eqref{eqn153}--\eqref{eqn155}
to obtain 
\begin{gather}\label{eqn156}
\begin{aligned}
&U_1(\gamma_1, \gamma_2, \gamma_3, t)\\
&=\int_0^t e ^{-\nu (\gamma_1^2 +\gamma_2^2
+\gamma_3^2) (t-\tau)}\\
 &\quad\times \frac{[( \gamma_2^2 +\gamma_3^2)
 \tilde{F}_1( \gamma_1, \gamma_2, \gamma_3, \tau)
-\gamma_1\gamma_2 \tilde{F}_2( \gamma_1, \gamma_2, \gamma_3, \tau )
 -\gamma_1\gamma_3 \tilde{F}_3( \gamma_1, \gamma_2, \gamma_3, \tau ) ]}
{ (\gamma_1^2 +\gamma_2^2+\gamma_3^2 ) }\,d\tau \\
&\quad +e ^{-\nu (\gamma_1^2 +\gamma_2^2 +\gamma_3^2) t}
 U_1^0(\gamma_1 ,\gamma_2 ,\gamma_3),
\end{aligned}\\
\label{eqn157}
\begin{aligned}
&U_2(\gamma_1, \gamma_2, \gamma_3, t)\\
&=\int_0^t e ^{-\nu (\gamma_1^2 +\gamma_2^2 +\gamma_3^2) (t-\tau)} \\
&\quad\times
\frac{[( \gamma_3^2 +\gamma_1^2)
 \tilde{F}_2( \gamma_1, \gamma_2, \gamma_3, \tau)
 -\gamma_2\gamma_3 \tilde{F}_3( \gamma_1, \gamma_2, \gamma_3, \tau )
 -\gamma_2\gamma_1 \tilde{F}_1( \gamma_1, \gamma_2, \gamma_3, \tau ) ]}
{ (\gamma_1^2 +\gamma_2^2+\gamma_3^2 ) }\,d\tau \\
&\quad +e ^{-\nu (\gamma_1^2 +\gamma_2^2 +\gamma_3^2) t}
U_2^0(\gamma_1 ,\gamma_2 ,\gamma_3),
\end{aligned} \\
\label{eqn158}
\begin{aligned}
&U_3(\gamma_1, \gamma_2, \gamma_3, t)\\
&=\int_0^t e ^{-\nu (\gamma_1^2 +\gamma_2^2 +\gamma_3^2) (t-\tau)} \\
&\quad\times \frac{[( \gamma_1^2 +\gamma_2^2)
\tilde{F}_3( \gamma_1, \gamma_2, \gamma_3, \tau) -\gamma_3\gamma_1
\tilde{F}_1( \gamma_1, \gamma_2, \gamma_3, \tau ) -\gamma_3\gamma_2 \tilde{F}_2
( \gamma_1, \gamma_2, \gamma_3, \tau ) ]}{ (\gamma_1^2
+\gamma_2^2+\gamma_3^2 ) }\,d\tau \\
&\quad +e ^{-\nu (\gamma_1^2 +\gamma_2^2 +\gamma_3^2) t}
U_3^0(\gamma_1 ,\gamma_2 ,\gamma_3)\,.
\end{aligned}
\end{gather}
Multiplying the left and right hand sides of the equalities \eqref{eqn156}--\eqref{eqn158} by the function $\delta (\gamma_1, \gamma_2, \gamma_3)$ from formula \eqref{eqn201aa} and using the Fourier inversion formula \eqref{A3} we obtain
\begin{align}
&\frac{1}{(2\pi)^{3/2}} \int_{-\infty}^{\infty}\int_{-\infty}^{\infty} \int_{-\infty}^{\infty}U_1(\gamma_1, \gamma_2, \gamma_3, t)\delta (\gamma_1,\gamma_2,\gamma_3)e ^{-i(x_1\gamma_1+x_2\gamma_2+x_3\gamma_3)}\,d\gamma_1d\gamma_2d\gamma_3\nonumber \\
&=\frac{1}{(2\pi)^{3/2}} \int_{-\infty}^{\infty}
\int_{-\infty}^{\infty} \int_{-\infty}^{\infty}
\Big[ \int_0^t e ^{-\nu (\gamma_1^2 +\gamma_2^2
+\gamma_3^2) (t-\tau)} \frac{( \gamma_2^2
+\gamma_3^2)}
 { (\gamma_1^2 +\gamma_2^2+\gamma_3^2 ) } \,\tilde{F}_1( \gamma_1, \gamma_2, \gamma_3, \tau)]d\tau \nonumber\\
&\quad - \int_0^t e ^{-\nu (\gamma_1^2 +\gamma_2^2
 +\gamma_3^2) (t-\tau)}\frac{ [\gamma_1\gamma_2 \tilde{F}_2
( \gamma_1, \gamma_2, \gamma_3, \tau )
+ \gamma_1\gamma_3 \tilde{F}_3( \gamma_1, \gamma_2, \gamma_3, \tau )]}
{ (\gamma_1^2 +\gamma_2^2+\gamma_3^2 ) }\,d\tau \nonumber\\
&\quad  + e ^{-\nu (\gamma_1^2 +\gamma_2^2 +\gamma_3^2) t}
U_1^0(\gamma_1 ,\gamma_2 ,\gamma_3)\Big]\delta (\gamma_1,\gamma_2,\gamma_3)
e ^{-i(x_1\gamma_1+x_2\gamma_2+x_3\gamma_3)}\,d\gamma_1d\gamma_2d\gamma_3
\nonumber \\
\label{eqn160a}
\end{align}

\begin{align}
&\frac{1}{(2\pi)^{3/2}} \int_{-\infty}^{\infty}\int_{-\infty}^{\infty} \int_{-\infty}^{\infty}U_2(\gamma_1, \gamma_2, \gamma_3, t)\delta (\gamma_1,\gamma_2,\gamma_3)e ^{-i(x_1\gamma_1+x_2\gamma_2+x_3\gamma_3)}\,d\gamma_1d\gamma_2d\gamma_3\nonumber \\
&= \frac{1}{(2\pi)^{3/2}} \int_{-\infty}^{\infty}
 \int_{-\infty}^{\infty} \int_{-\infty}^{\infty}
\Big[ \int_0^t e ^{-\nu (\gamma_1^2 +\gamma_2^2 +\gamma_3^2) (t-\tau)}
\frac{[( \gamma_3^2 +\gamma_1^2)  \tilde{F}_2( \gamma_1, \gamma_2, \gamma_3,
\tau)]} { (\gamma_1^2 +\gamma_2^2+\gamma_3^2 ) } \,d\tau \nonumber\\
&\quad - \int_0^t e ^{-\nu (\gamma_1^2 +\gamma_2^2
 +\gamma_3^2) (t-\tau)}\frac{[\gamma_2\gamma_3 \tilde{F}_3
 ( \gamma_1, \gamma_2, \gamma_3, \tau ) +\gamma_2\gamma_1
\tilde{F}_1( \gamma_1, \gamma_2, \gamma_3, \tau )]}
{ (\gamma_1^2 +\gamma_2^2+\gamma_3^2 ) }\,d\tau \nonumber \\
&\quad +e ^{-\nu (\gamma_1^2 +\gamma_2^2 +\gamma_3^2) t}
U_2^0(\gamma_1 ,\gamma_2 ,\gamma_3)\Big]\delta (\gamma_1,\gamma_2,\gamma_3)
e ^{-i(x_1\gamma_1+x_2\gamma_2+x_3\gamma_3)}\,d\gamma_1d\gamma_2d\gamma_3 \nonumber
\nonumber \\
\label{eqn161a}
\end{align}

\begin{align}
&\frac{1}{(2\pi)^{3/2}} \int_{-\infty}^{\infty}\int_{-\infty}^{\infty} \int_{-\infty}^{\infty}U_3(\gamma_1, \gamma_2, \gamma_3, t)\delta (\gamma_1,\gamma_2,\gamma_3)e ^{-i(x_1\gamma_1+x_2\gamma_2+x_3\gamma_3)}\,d\gamma_1d\gamma_2d\gamma_3\nonumber \\
&= \frac{1}{(2\pi)^{3/2}} \int_{-\infty}^{\infty}
\int_{-\infty}^{\infty} \int_{-\infty}^{\infty}
\Big[ \int_0^t e ^{-\nu (\gamma_1^2 +\gamma_2^2 +\gamma_3^2)
(t-\tau)} \frac{[( \gamma_1^2 +\gamma_2^2)  \tilde{F}_3( \gamma_1,
\gamma_2, \gamma_3, \tau)]} { (\gamma_1^2 +\gamma_2^2+\gamma_3^2 ) } \,d\tau
\nonumber \\
&\quad - \int_0^t e ^{-\nu (\gamma_1^2 +\gamma_2^2 +\gamma_3^2) (t-\tau)}
\frac{[\gamma_3\gamma_1 \tilde{F}_1( \gamma_1, \gamma_2, \gamma_3, \tau )
+\gamma_3\gamma_2 \tilde{F}_2( \gamma_1, \gamma_2, \gamma_3, \tau )]}
 { (\gamma_1^2 +\gamma_2^2+\gamma_3^2 ) }\,d\tau \nonumber \\
&\quad + e ^{-\nu (\gamma_1^2 +\gamma_2^2 +\gamma_3^2) t}  U_3^0
(\gamma_1 ,\gamma_2 ,\gamma_3)\Big]\delta (\gamma_1,\gamma_2,\gamma_3) e ^{-i(x_1\gamma_1+x_2\gamma_2+x_3\gamma_3)}
\,d\gamma_1d\gamma_2d\gamma_3 \nonumber
\nonumber \\
\label{eqn162a}
\end{align}

\begin{remark} \rm
Right hand sides of the equations \eqref{eqn160a}--\eqref{eqn162a}  have integrands that
contain multipliers

1) fractions $\chi_{ij}(\gamma_1, \gamma_2, \gamma_3)$ with simple features at 
$\gamma_1 = \gamma_2 = \gamma_3 = 0$

\begin{gather*}
\frac{( \gamma_2^2 +\gamma_3^2)} { (\gamma_1^2 +\gamma_2^2+\gamma_3^2 ) },\quad
\frac{( \gamma_1\cdot\gamma_2)} { (\gamma_1^2 +\gamma_2^2+\gamma_3^2 ) },\quad
\frac{( \gamma_1\cdot\gamma_3)} { (\gamma_1^2 +\gamma_2^2+\gamma_3^2 ) }, \\
\frac{( \gamma_2\cdot\gamma_1)} { (\gamma_1^2 +\gamma_2^2+\gamma_3^2 ) },\quad
\frac{( \gamma_3^2 +\gamma_1^2)} { (\gamma_1^2 +\gamma_2^2+\gamma_3^2 ) },\quad
\frac{( \gamma_2\cdot\gamma_3)} { (\gamma_1^2 +\gamma_2^2+\gamma_3^2 ) },\\
\frac{( \gamma_3\cdot\gamma_1)} { (\gamma_1^2 +\gamma_2^2+\gamma_3^2 ) }, \quad
\frac{( \gamma_3\cdot\gamma_2)} { (\gamma_1^2 +\gamma_2^2+\gamma_3^2 ) },\quad
\frac{( \gamma_1^2 +\gamma_2^2)} { (\gamma_1^2 +\gamma_2^2+\gamma_3^2 ) }\\
\\
i,j = 1,2,3.
\end{gather*}

and

2) function $\delta(\gamma_1, \gamma_2, \gamma_3)$ is determined by formula \eqref{eqn201aa} with property \eqref{eqn201zz}.

Consequently integrands belong to space S.
\end{remark}

Further we put (1 - 1 + $\delta(\gamma_1,\gamma_2, \gamma_3)$) instead of $\delta(\gamma_1, \gamma_2, \gamma_3)$ in left hand sides of the equations \eqref{eqn160a}--\eqref{eqn162a}. Then we move integrals with (-1 + $\delta(\gamma_1,\gamma_2,\gamma_3)$) from left hand sides to right hand sides of the equations \eqref{eqn160a}--\eqref{eqn162a}. And we have

\begin{align}
&u_1(x_1, x_2, x_3, t) \nonumber \\
&=\frac{1}{(2\pi)^{3/2}} \int_{-\infty}^{\infty}
\int_{-\infty}^{\infty} \int_{-\infty}^{\infty}
\Big[ \int_0^t e ^{-\nu (\gamma_1^2 +\gamma_2^2
+\gamma_3^2) (t-\tau)} \frac{( \gamma_2^2
+\gamma_3^2)}
 { (\gamma_1^2 +\gamma_2^2+\gamma_3^2 ) } \,\tilde{F}_1( \gamma_1, \gamma_2, \gamma_3, \tau)]d\tau \nonumber\\
&\quad - \int_0^t e ^{-\nu (\gamma_1^2 +\gamma_2^2
 +\gamma_3^2) (t-\tau)}\frac{ [\gamma_1\gamma_2 \tilde{F}_2
( \gamma_1, \gamma_2, \gamma_3, \tau )
+ \gamma_1\gamma_3 \tilde{F}_3( \gamma_1, \gamma_2, \gamma_3, \tau )]}
{ (\gamma_1^2 +\gamma_2^2+\gamma_3^2 ) }\,d\tau \nonumber\\
&\quad  + e ^{-\nu (\gamma_1^2 +\gamma_2^2 +\gamma_3^2) t}
U_1^0(\gamma_1 ,\gamma_2 ,\gamma_3)\Big]\delta (\gamma_1,\gamma_2,\gamma_3)
e ^{-i(x_1\gamma_1+x_2\gamma_2+x_3\gamma_3)}\,d\gamma_1d\gamma_2d\gamma_3
\nonumber \\
&+ \frac{1}{(2\pi)^{3/2}} \int_{-\infty}^{\infty}\int_{-\infty}^{\infty} \int_{-\infty}^{\infty}U_1(\gamma_1, \gamma_2, \gamma_3, t)(1 - \delta (\gamma_1,\gamma_2,\gamma_3))e ^{-i(x_1\gamma_1+x_2\gamma_2+x_3\gamma_3)}\,d\gamma_1d\gamma_2d\gamma_3\nonumber \\
&=\frac{1}{8\pi^3} \int_0^t\int_{-\infty}^{\infty}
\int_{-\infty}^{\infty} \int_{-\infty}^{\infty}
\Big[\frac{( \gamma_2^2 +\gamma_3^2)} { (\gamma_1^2 +\gamma_2^2+\gamma_3^2 ) }
   e ^{-\nu (\gamma_1^2 +\gamma_2^2 +\gamma_3^2) (t-\tau)} \nonumber \\
&\quad\times \int_{-\infty}^{\infty}\int_{-\infty}^{\infty}
\int_{-\infty}^{\infty}e  ^{i(\tilde x_1\gamma_1+\tilde x_2\gamma_2+\tilde x_3
\gamma_3)}  \tilde{f}_1(\tilde x_1,\tilde x_2, \tilde x_3,\tau)\,d\tilde x_1
d\tilde x_2 d\tilde x_3\Big]\delta (\gamma_1,\gamma_2,\gamma_3) \nonumber \\
&\quad\times e ^{-i(x_1\gamma_1+x_2\gamma_2+x_3\gamma_3)}\,d\gamma_1d\gamma_2
d\gamma_3d\tau
 \nonumber \\
&\quad - \frac{1}{8\pi^3} \int_0^t\int_{-\infty}^{\infty}
\int_{-\infty}^{\infty} \int_{-\infty}^{\infty}
\Big[\frac{ \gamma_1\gamma_2} { (\gamma_1^2 +\gamma_2^2+\gamma_3^2 ) }
   e ^{-\nu
(\gamma_1^2 +\gamma_2^2 +\gamma_3^2) (t-\tau)} \nonumber\\
&\quad\times \int_{-\infty}^{\infty}\int_{-\infty}^{\infty}\int_{-\infty}^{\infty}
e^{i(\tilde x_1\gamma_1+\tilde x_2\gamma_2+\tilde x_3\gamma_3)}
\tilde{f}_2(\tilde x_1,\tilde x_2, \tilde x_3,\tau)\,d\tilde x_1
d\tilde x_2 d\tilde x_3\Big] \delta (\gamma_1,\gamma_2,\gamma_3)\nonumber \\
&\quad\times   e ^{-i(x_1\gamma_1+x_2\gamma_2+x_3\gamma_3)}\,d\gamma_1d\gamma_2
d\gamma_3d\tau \nonumber
\\
&\quad- \frac{1}{8\pi^3} \int_0^t\int_{-\infty}^{\infty} \int_{-\infty}^{\infty}
\int_{-\infty}^{\infty}\Big[\frac{ \gamma_1\gamma_3}
{ (\gamma_1^2 +\gamma_2^2+\gamma_3^2 ) }
  e ^{-\nu (\gamma_1^2 +\gamma_2^2
 +\gamma_3^2) (t-\tau)} \nonumber \\
 &\quad\times \int_{-\infty}^{\infty}\int_{-\infty}^{\infty}
\int_{-\infty}^{\infty}e  ^{i(\tilde x_1\gamma_1
+\tilde x_2\gamma_2+\tilde x_3\gamma_3)}
 \tilde{f}_3(\tilde x_1,\tilde x_2, \tilde x_3,\tau)\,d\tilde x_1
d\tilde x_2 d\tilde x_3\Big]\delta (\gamma_1,\gamma_2,\gamma_3)\nonumber \\
&\quad\times e ^{-i(x_1\gamma_1+x_2\gamma_2+x_3\gamma_3)}
\,d\gamma_1d\gamma_2d\gamma_3d\tau \nonumber
 \\
&\quad + \frac{1}{8\pi^3} \int_{-\infty}^{\infty} \int_{-\infty}^{\infty}
\int_{-\infty}^{\infty} e ^{-\nu (\gamma_1^2 +\gamma_2^2 +\gamma_3^2) t}\nonumber  \\
&\quad\times \Big[  \int_{-\infty}^{\infty}\int_{-\infty}^{\infty}
\int_{-\infty}^{\infty}e  ^{i(\tilde x_1\gamma_1+\tilde x_2\gamma_2
+\tilde x_3\gamma_3)}
 u_1^0(\tilde x_1,\tilde x_2, \tilde x_3)\,d\tilde x_1d\tilde x_2 d\tilde x_3\Big]\delta (\gamma_1,\gamma_2,\gamma_3)
\nonumber \\
&\quad\times
e ^{-i(x_1\gamma_1+x_2\gamma_2+x_3\gamma_3)}\,d\gamma_1d\gamma_2d\gamma_3
\nonumber \\
&\quad + \frac{1}{8\pi^3} \int_{-\infty}^{\infty} \int_{-\infty}^{\infty}
\int_{-\infty}^{\infty} \nonumber  \\
&\quad\times \Big[  \int_{-\infty}^{\infty}\int_{-\infty}^{\infty}
\int_{-\infty}^{\infty}e  ^{i(\tilde x_1\gamma_1+\tilde x_2\gamma_2
+\tilde x_3\gamma_3)}
 u_1(\tilde x_1,\tilde x_2, \tilde x_3, t)\,d\tilde x_1d\tilde x_2 d\tilde x_3\Big](1 - \delta (\gamma_1,\gamma_2,\gamma_3))
\nonumber \\
&\quad\times
e ^{-i(x_1\gamma_1+x_2\gamma_2+x_3\gamma_3)}\,d\gamma_1d\gamma_2d\gamma_3
\nonumber \\
&= S_{11}(\tilde{f}_1)+ S_{12}(\tilde{f}_2)+ S_{13}(\tilde{f}_3)+B(u_1^0)+E (u_1),
\label{eqn160}
\end{align}
 \\
 \\ 
 \\  
\begin{align}
&u_2(x_1, x_2, x_3, t) \nonumber  \\
&= \frac{1}{(2\pi)^{3/2}} \int_{-\infty}^{\infty}
 \int_{-\infty}^{\infty} \int_{-\infty}^{\infty}
\Big[ \int_0^t e ^{-\nu (\gamma_1^2 +\gamma_2^2 +\gamma_3^2) (t-\tau)}
\frac{[( \gamma_3^2 +\gamma_1^2)  \tilde{F}_2( \gamma_1, \gamma_2, \gamma_3,
\tau)]} { (\gamma_1^2 +\gamma_2^2+\gamma_3^2 ) } \,d\tau \nonumber\\
&\quad - \int_0^t e ^{-\nu (\gamma_1^2 +\gamma_2^2
 +\gamma_3^2) (t-\tau)}\frac{[\gamma_2\gamma_3 \tilde{F}_3
 ( \gamma_1, \gamma_2, \gamma_3, \tau ) +\gamma_2\gamma_1
\tilde{F}_1( \gamma_1, \gamma_2, \gamma_3, \tau )]}
{ (\gamma_1^2 +\gamma_2^2+\gamma_3^2 ) }\,d\tau \nonumber \\
&\quad +e ^{-\nu (\gamma_1^2 +\gamma_2^2 +\gamma_3^2) t}
U_2^0(\gamma_1 ,\gamma_2 ,\gamma_3)\Big]\delta (\gamma_1,\gamma_2,\gamma_3)
e ^{-i(x_1\gamma_1+x_2\gamma_2+x_3\gamma_3)}\,d\gamma_1d\gamma_2d\gamma_3 \nonumber
 \\
&+ \frac{1}{(2\pi)^{3/2}} \int_{-\infty}^{\infty}\int_{-\infty}^{\infty} \int_{-\infty}^{\infty}U_2(\gamma_1, \gamma_2, \gamma_3, t)(1 - \delta (\gamma_1,\gamma_2,\gamma_3))e ^{-i(x_1\gamma_1+x_2\gamma_2+x_3\gamma_3)}\,d\gamma_1d\gamma_2d\gamma_3\nonumber \\
&= -\frac{1}{8\pi^3} \int_0^t\int_{-\infty}^{\infty}
 \int_{-\infty}^{\infty} \int_{-\infty}^{\infty}\Big[\frac{ \gamma_2\gamma_1}
{ (\gamma_1^2 +\gamma_2^2+\gamma_3^2 ) }   
e ^{-\nu (\gamma_1^2 +\gamma_2^2 +\gamma_3^2) (t-\tau)}  \nonumber \\
&\quad\times \int_{-\infty}^{\infty}\int_{-\infty}^{\infty}
\int_{-\infty}^{\infty}e  ^{i(\tilde x_1\gamma_1+\tilde x_2\gamma_2
+\tilde x_3\gamma_3)}
 \tilde{f}_1(\tilde x_1,\tilde x_2, \tilde x_3,\tau)\,d\tilde x_1
d\tilde x_2 d\tilde x_3\Big] \delta (\gamma_1,\gamma_2,\gamma_3)\nonumber \\
&\quad\times e ^{-i(x_1\gamma_1+x_2\gamma_2+x_3\gamma_3)}\,d\gamma_1
d\gamma_2d\gamma_3d\tau \nonumber
\\
&\quad + \frac{1}{8\pi^3} \int_0^t\int_{-\infty}^{\infty}
 \int_{-\infty}^{\infty} \int_{-\infty}^{\infty}
\Big[\frac{ (\gamma_3^2 + \gamma_1^2)} { (\gamma_1^2 +\gamma_2^2+\gamma_3^2 ) }
   e ^{-\nu (\gamma_1^2 +\gamma_2^2 +\gamma_3^2)
(t-\tau)} \nonumber \\
&\quad\times  \int_{-\infty}^{\infty}\int_{-\infty}^{\infty}
\int_{-\infty}^{\infty}e  ^{i(\tilde x_1\gamma_1+\tilde x_2\gamma_2
+\tilde x_3\gamma_3)}
\tilde{f}_2(\tilde x_1,\tilde x_2, \tilde x_3,\tau)\,d\tilde x_1d\tilde x_2
 d\tilde x_3\Big]\delta (\gamma_1,\gamma_2,\gamma_3) \nonumber\\
&\quad\times  e ^{-i(x_1\gamma_1+x_2\gamma_2+x_3\gamma_3)}
\,d\gamma_1d\gamma_2d\gamma_3d\tau \nonumber
\\
&\quad - \frac{1}{8\pi^3} \int_0^t\int_{-\infty}^{\infty}
\int_{-\infty}^{\infty} \int_{-\infty}^{\infty}
\Big[\frac{ \gamma_2\gamma_3} { (\gamma_1^2 +\gamma_2^2+\gamma_3^2 ) }
   e ^{-\nu (\gamma_1^2 +\gamma_2^2 +\gamma_3^2)
 (t-\tau)} \nonumber\\
&\quad\times \int_{-\infty}^{\infty}\int_{-\infty}^{\infty}
\int_{-\infty}^{\infty}e  ^{i(\tilde x_1\gamma_1+\tilde x_2\gamma_2
+\tilde x_3\gamma_3)}
 \tilde{f}_3(\tilde x_1,\tilde x_2, \tilde x_3,\tau)\,d\tilde x_1
d\tilde x_2 d\tilde x_3\Big] \delta (\gamma_1,\gamma_2,\gamma_3)\nonumber\\
 &\quad \times e ^{-i(x_1\gamma_1+x_2\gamma_2+x_3\gamma_3)}\,d\gamma_1
d\gamma_2d\gamma_3d\tau
\nonumber  \\
&\quad + \frac{1}{8\pi^3} \int_{-\infty}^{\infty}
\int_{-\infty}^{\infty} \int_{-\infty}^{\infty}
e ^{-\nu (\gamma_1^2 +\gamma_2^2 +\gamma_3^2) t}\nonumber \\
&\quad\times \Big[  \int_{-\infty}^{\infty}\int_{-\infty}^{\infty}
\int_{-\infty}^{\infty}e  ^{i(\tilde x_1\gamma_1+\tilde x_2\gamma_2
+\tilde x_3\gamma_3)}
 u_2^0(\tilde x_1,\tilde x_2, \tilde x_3)\,d\tilde x_1d\tilde x_2
d\tilde x_3\Big] \delta (\gamma_1,\gamma_2,\gamma_3)\nonumber \\
&\quad\times e ^{-i(x_1\gamma_1+x_2\gamma_2+x_3\gamma_3)}\,d\gamma_1
d\gamma_2d\gamma_3 \nonumber \\
&\quad + \frac{1}{8\pi^3} \int_{-\infty}^{\infty}
\int_{-\infty}^{\infty} \int_{-\infty}^{\infty}
\nonumber \\
&\quad\times \Big[  \int_{-\infty}^{\infty}\int_{-\infty}^{\infty}
\int_{-\infty}^{\infty}e  ^{i(\tilde x_1\gamma_1+\tilde x_2\gamma_2
+\tilde x_3\gamma_3)}
 u_2(\tilde x_1,\tilde x_2, \tilde x_3, t)\,d\tilde x_1d\tilde x_2
d\tilde x_3\Big] (1 - \delta (\gamma_1,\gamma_2,\gamma_3))\nonumber \\
&\quad\times e ^{-i(x_1\gamma_1+x_2\gamma_2+x_3\gamma_3)}\,d\gamma_1
d\gamma_2d\gamma_3 \nonumber \\
&= S_{21}(\tilde{f}_1)+ S_{22}(\tilde{f}_2)+ S_{23}(\tilde{f}_3)+B(u_2^0)+E(u_2),
\label{eqn161}
\end{align}
 \\
 \\ 
 \\  
\begin{align}
&u_3(x_1, x_2, x_3, t) \nonumber \\
&= \frac{1}{(2\pi)^{3/2}} \int_{-\infty}^{\infty}
\int_{-\infty}^{\infty} \int_{-\infty}^{\infty}
\Big[ \int_0^t e ^{-\nu (\gamma_1^2 +\gamma_2^2 +\gamma_3^2)
(t-\tau)} \frac{[( \gamma_1^2 +\gamma_2^2)  \tilde{F}_3( \gamma_1,
\gamma_2, \gamma_3, \tau)]} { (\gamma_1^2 +\gamma_2^2+\gamma_3^2 ) } \,d\tau
\nonumber \\
&\quad - \int_0^t e ^{-\nu (\gamma_1^2 +\gamma_2^2 +\gamma_3^2) (t-\tau)}
\frac{[\gamma_3\gamma_1 \tilde{F}_1( \gamma_1, \gamma_2, \gamma_3, \tau )
+\gamma_3\gamma_2 \tilde{F}_2( \gamma_1, \gamma_2, \gamma_3, \tau )]}
 { (\gamma_1^2 +\gamma_2^2+\gamma_3^2 ) }\,d\tau \nonumber \\
&\quad + e ^{-\nu (\gamma_1^2 +\gamma_2^2 +\gamma_3^2) t}  U_3^0
(\gamma_1 ,\gamma_2 ,\gamma_3)\Big]\delta (\gamma_1,\gamma_2,\gamma_3) e ^{-i(x_1\gamma_1+x_2\gamma_2+x_3\gamma_3)}
\,d\gamma_1d\gamma_2d\gamma_3 \nonumber
\\
&+ \frac{1}{(2\pi)^{3/2}} \int_{-\infty}^{\infty}\int_{-\infty}^{\infty} \int_{-\infty}^{\infty}U_3(\gamma_1, \gamma_2, \gamma_3, t)(1 - \delta (\gamma_1,\gamma_2,\gamma_3))e ^{-i(x_1\gamma_1+x_2\gamma_2+x_3\gamma_3)}\,d\gamma_1d\gamma_2d\gamma_3\nonumber \\
&=-\frac{1}{8\pi^3} \int_0^t\int_{-\infty}^{\infty}
\int_{-\infty}^{\infty} \int_{-\infty}^{\infty}
\Big[\frac{ \gamma_3\gamma_1} { (\gamma_1^2 +\gamma_2^2+\gamma_3^2 ) }
   e ^{-\nu (\gamma_1^2 +\gamma_2^2 +\gamma_3^2) (t-\tau)}
\nonumber \\
&\quad\times \int_{-\infty}^{\infty}\int_{-\infty}^{\infty}
\int_{-\infty}^{\infty}e  ^{i(\tilde x_1\gamma_1+\tilde x_2\gamma_2
+\tilde x_3\gamma_3)} \tilde{f}_1(\tilde x_1,\tilde x_2, \tilde x_3,\tau)
\,d\tilde x_1d\tilde x_2 d\tilde x_3\Big] \delta (\gamma_1,\gamma_2,\gamma_3)\nonumber \\
&\quad\times e ^{-i(x_1\gamma_1+x_2\gamma_2+x_3\gamma_3)}\,d\gamma_1d\gamma_2
d\gamma_3d\tau \nonumber \\
&\quad -\frac{1}{8\pi^3} \int_0^t\int_{-\infty}^{\infty}
\int_{-\infty}^{\infty} \int_{-\infty}^{\infty}
\Big[\frac{ \gamma_3\gamma_2} { (\gamma_1^2 +\gamma_2^2+\gamma_3^2 ) }
   e ^{-\nu (\gamma_1^2 +\gamma_2^2 +\gamma_3^2)
(t-\tau)}  \nonumber \\
&\quad\times \int_{-\infty}^{\infty}\int_{-\infty}^{\infty}
\int_{-\infty}^{\infty}e  ^{i(\tilde x_1\gamma_1+\tilde x_2\gamma_2
+\tilde x_3\gamma_3)}
 \tilde{f}_2(\tilde x_1,\tilde x_2, \tilde x_3,\tau)\,d\tilde x_1
d\tilde x_2 d\tilde x_3\Big] \delta (\gamma_1,\gamma_2,\gamma_3)\nonumber \\
&\quad \times e ^{-i(x_1\gamma_1+x_2\gamma_2+x_3\gamma_3)}
\,d\gamma_1d\gamma_2d\gamma_3d\tau
\nonumber \\
&\quad +\frac{1}{8\pi^3} \int_0^t\int_{-\infty}^{\infty}
\int_{-\infty}^{\infty} \int_{-\infty}^{\infty}
\Big[\frac{ (\gamma_1^2 + \gamma_2^2)} { (\gamma_1^2 +\gamma_2^2+\gamma_3^2 ) }
  e ^{-\nu (\gamma_1^2 +\gamma_2^2 +\gamma_3^2) (t-\tau)}
\nonumber \\
&\quad\times \int_{-\infty}^{\infty}\int_{-\infty}^{\infty}\int_{-\infty}^{\infty}
e  ^{i(\tilde x_1\gamma_1+\tilde x_2\gamma_2+\tilde x_3\gamma_3)}
 \tilde{f}_3(\tilde x_1,\tilde x_2, \tilde x_3,\tau)\,d\tilde x_1
d\tilde x_2 d\tilde x_3\Big] \delta (\gamma_1,\gamma_2,\gamma_3)\nonumber \\
&\quad \times e ^{-i(x_1\gamma_1+x_2\gamma_2+x_3\gamma_3)}\,d\gamma_1
d\gamma_2d\gamma_3d\tau \nonumber \\
&\quad + \frac{1}{8\pi^3} \int_{-\infty}^{\infty}
\int_{-\infty}^{\infty} \int_{-\infty}^{\infty}
e ^{-\nu (\gamma_1^2 +\gamma_2^2 +\gamma_3^2) t} \nonumber \\
&\quad\times \Big[  \int_{-\infty}^{\infty}
 \int_{-\infty}^{\infty}\int_{-\infty}^{\infty}
 e  ^{i(\tilde x_1\gamma_1+\tilde x_2\gamma_2+\tilde x_3\gamma_3)}
 u_3^0(\tilde x_1,\tilde x_2, \tilde x_3)\,d\tilde x_1d\tilde x_2
d\tilde x_3\Big] \delta (\gamma_1,\gamma_2,\gamma_3)\nonumber \\
&\quad\times e ^{-i(x_1\gamma_1+x_2\gamma_2+x_3\gamma_3)}
\,d\gamma_1d\gamma_2d\gamma_3 \nonumber \\
&\quad + \frac{1}{8\pi^3} \int_{-\infty}^{\infty}
\int_{-\infty}^{\infty} \int_{-\infty}^{\infty}
\nonumber \\
&\quad\times \Big[  \int_{-\infty}^{\infty}\int_{-\infty}^{\infty}
\int_{-\infty}^{\infty}e  ^{i(\tilde x_1\gamma_1+\tilde x_2\gamma_2
+\tilde x_3\gamma_3)}
 u_3(\tilde x_1,\tilde x_2, \tilde x_3, t)\,d\tilde x_1d\tilde x_2
d\tilde x_3\Big] (1 - \delta (\gamma_1,\gamma_2,\gamma_3))\nonumber \\
&\quad\times e ^{-i(x_1\gamma_1+x_2\gamma_2+x_3\gamma_3)}\,d\gamma_1
d\gamma_2d\gamma_3 \nonumber \\
&= S_{31}(\tilde{f}_1)+ S_{32}(\tilde{f}_2)+ S_{33}(\tilde{f}_3)+B(u_3^0)+E(u_3).
\label{eqn162}
\end{align}

Here $S_{11}()$, $S_{12}()$, $S_{13}()$, $S_{21}()$, $S_{22}()$,
$S_{23}()$, $S_{31}()$, $S_{32}()$, $S_{33}()$, $B()$, $E()$ are integral
operators, and satisfy
\[
S_{12}()=S_{21}(),\quad S_{13}()=S_{31}() ,\quad S_{23}()=S_{32}()\,.
 \]

\begin{remark} \rm
It should be noted that for t = 0 multiple integrals, containing integral $\int_0^t$, equal zero and the formulas \eqref{eqn160}--\eqref{eqn162} easily converted to the form

$u_i(x_1, x_2, x_3, 0) = u_i^0(x_1, x_2, x_3), i = 1,2,3.$
\end{remark}

From the three expressions above for $u_1,u_2,u_3$ \eqref{eqn160}--\eqref{eqn162}, it
follows that the vector $\vec{u}$ can be represented as:
\begin{equation}\label{eqn164}
\vec{u}=\bar{\bar{S}}\cdot\vec{\tilde{f}}+\bar{\bar{B}}\cdot\vec{u}^0+\bar{\bar{E}}\cdot\vec{u}
=\bar{\bar{S}}\cdot\vec{f}- \bar{\bar{S}}\cdot(\vec{u}\cdot\nabla)\vec{u}
+\bar{\bar{B}}\cdot\vec{u}^0+\bar{\bar{E}}\cdot\vec{u}\,,
\end{equation}
where $\vec{\tilde{f}}$ is determined by formula \eqref{eqn14}.

Here $\bar{\bar{S}}$, $\bar{\bar{B}}$ and $\bar{\bar{E}}$ are the matrix integral operators:
\begin{equation}\label{eqn164m}
 \begin{pmatrix}
S_{11} & S_{12} & S_{13} \\
S_{21} & S_{22} & S_{23} \\
S_{31} & S_{32} & S_{33}  \end{pmatrix}, \quad
 \begin{pmatrix}
B & 0 & 0 \\
0 & B & 0 \\
0 & 0 & B  \end{pmatrix}, \quad
 \begin{pmatrix}
E & 0 & 0 \\
0 & E & 0 \\
0 & 0 & E  \end{pmatrix}.
\end{equation}

\section{Equivalence of the Cauchy problem in differential form
 \eqref{eqn1}--\eqref{eqn3} and in integral form}

Let us denote solution of  \eqref{eqn1}--\eqref{eqn3} as
\{$\vec{u}(x_1, x_2, x_3, t)$, p($x_1, x_2, x_3,$ t)\};
in other words let us consider the infinitely differentiable
by $t \in [0,\infty)$  vector-function
$\vec{u}(x_1, x_2, x_3, t) \in \overrightarrow{TS}$,
and infinitely differentiable
function $p(x_1, x_2, x_3, t) \in S$,
that turn equations \eqref{eqn1} and \eqref{eqn2} into identities.
Vector-function $\vec{u}(x_1, x_2, x_3, t)$ also satisfies the initial
condition \eqref{eqn3} $(\vec{u}^0 (x_1, x_2, x_3)\in \overrightarrow{TS})$:
\begin{equation}\label{eqn202}
\vec{u}(x_1, x_2, x_3, t)|_{t = 0}=\vec{u}^0(x_1, x_2, x_3)
\end{equation}

Let us put $\{\vec{u}(x_1, x_2, x_3, t), p(x_1, x_2, x_3, t)\}$ into
equations \eqref{eqn1}, \eqref{eqn2} and apply Fourier and Laplace transforms
to the resulting identities considering initial condition \eqref{eqn3}. After
all required operations (as in sections 2 and 4) we receive that vector-function
$\vec{u}(x_1, x_2, x_3, t)$ satisfies integral equation
\begin{equation}\label{eqn203}
\vec{u}=\bar{\bar{S}}\cdot \vec{f}-
\bar{\bar{S}}\cdot(\vec{u}\cdot\nabla)\vec{u}
+\bar{\bar{B}}\cdot\vec{u}^0+\bar{\bar{E}}\cdot\vec{u}\; = \bar{\bar{S}}^{\nabla}\cdot\vec{u}
\end{equation}
Then the vector-function grad $p\in \overrightarrow{TS}$ is defined by
equations \eqref{eqn1} where vector-function $\vec{u}$ is defined
by \eqref{eqn203}.

Here $\vec{f} \in \overrightarrow{TS}$, $\vec{u}^0 \in \overrightarrow{TS}$
and $\bar{\bar{S}},\bar{\bar{B}},\bar{\bar{E}},\bar{\bar{S}}^{\nabla}$ are matrix integral
operators.
Vector-functions
$ \bar{\bar{S}}\cdot\vec{f}$, $\bar{\bar{B}}\cdot\vec{u}^0$, $\bar{\bar{E}}\cdot\vec{u}$,
$\bar{\bar{S}}\cdot(\vec{u}\cdot\nabla)\vec{u}$  also belong
$\overrightarrow{TS}$ since the Fourier transform maps the Space $\overrightarrow{TS}$ onto the Space $\overrightarrow{TS}$, and
vice versa the inverse Fourier transform maps the Space $\overrightarrow{TS}$ onto the Space $\overrightarrow{TS}$.

Going from the other side, let us assume that
$\vec{u}(x_1, x_2, x_3, t)\in \overrightarrow{TS}$ is continuous for
$t \in [0,\infty)$ solution of integral equation \eqref{eqn203}.
Integral-operators $S_{ij}\cdot(\vec{u}\cdot\nabla)\vec{u}$ [see \eqref{eqn160}--\eqref{eqn162}] are continuous
for $t \in [0,\infty)$. From here we
obtain that according to \eqref{eqn203},
$$
\vec{u}(x_1, x_2, x_3, 0)  = \vec{u}^0(x_1, x_2, x_3)
$$
also that $\vec{u}(x_1, x_2, x_3, t)$ is differentiable by
$t \in [0,\infty)$. As described before, the Fourier transform maps
the Space $\overrightarrow{TS}$ onto the Space $\overrightarrow{TS}$, and
vice versa the inverse Fourier transform maps the Space $\overrightarrow{TS}$ onto the Space $\overrightarrow{TS}$.
Hence, $\{\vec{u}(x_1, x_2, x_3, t)  \text{ and}$
$\;\; p(x_1, x_2, x_3, t)\}$ is the
solution of the Cauchy problem \eqref{eqn1}--\eqref{eqn3}.
From here we see that solving the Cauchy problem \eqref{eqn1}--\eqref{eqn3}
 is equivalent to finding continuous in $t \in [0,\infty)$ solution of
integral equation \eqref{eqn203}.

\section{The properties of the matrix integral operators  $\bar{\bar{B}},\;\bar{\bar{E}},\;\bar{\bar{S}}$.}

Further we have $\vec{f}\equiv 0$. Let us rewrite integral equation
 \eqref{eqn203} with this condition as
\begin{equation}\label{eqn203ab7}
\vec{u}=- \bar{\bar{S}}\cdot(\vec{u}\cdot\nabla)\vec{u}+\bar{\bar{E}}\cdot\vec{u}+\bar{\bar{B}}
\cdot\vec{u}^0
\end{equation}
$\vec{u}^0(x_1,x_2,x_3) \in \overrightarrow{TS}$.

The integral equation \eqref{eqn203ab7} shows that as the Fourier transform maps the Space $\overrightarrow{TS}$ onto the Space $\overrightarrow{TS}$, and
vice versa the inverse Fourier transform maps the Space $\overrightarrow{TS}$ onto the Space $\overrightarrow{TS}$  then the solution of this integral equation \eqref{eqn203ab7} we will  seek as a vector-function of the Space $\overrightarrow{TS}$.

To solve the integral equation \eqref{eqn203ab7} we will use the null norm.
\begin{equation}\label{eqn203t317v}
|\;\; | \equiv ||\;\;  ||_{0} \;\;\;i.e. \;\;p = 0
\end{equation}
In other words we use for functions $\varphi$ the norm from formula \eqref{eqn200} with $p = 0$ and for vector-functions $\vec{\varphi}$ the norm from formula \eqref{eqn201} with $p = 0.$

Let us describe in details the properties of the matrix integral operator $\bar{\bar{B}}$.
\begin{equation}\label{eqn164n7}
\bar{\bar{B}} = 
 \begin{pmatrix}
B & 0 & 0 \\
0 & B & 0 \\
0 & 0 & B  \end{pmatrix} \quad 
\end{equation}
Here the components $B$ of the matrix integral operator $\bar{\bar{B}}$ have the following representation:
\begin{align}
&B(u_{{i}}^0) = \frac{1}{8\pi^3} \int_{-\infty}^{\infty} \int_{-\infty}^{\infty}
\int_{-\infty}^{\infty} e ^{-\nu (\gamma_{1}^2 +\gamma_{2}^2 +\gamma_{3}^2) t}
\nonumber  \\
&\quad\times \Big[  \int_{-\infty}^{\infty}\int_{-\infty}^{\infty}
\int_{-\infty}^{\infty}e  ^{i(\tilde x_{1}\gamma_{1}+\tilde x_{2}\gamma_{2}
+\tilde x_{3}\gamma_{3})}
 u_{i}^0(\tilde x_{1},\tilde x_{2}, \tilde x_{3})\,d\tilde x_{1}d\tilde x_{2} d\tilde x_{3}\Big]
\nonumber \\
&\quad\times
\delta (\gamma_{1},\gamma_{2},\gamma_{3})e ^{-i(x_{1}\gamma_{1}+x_{2}\gamma_{2}+x_{3}\gamma_{3})}\,d\gamma_{1}d\gamma_{2}d\gamma_{3}
\label{eqn160n17}
\end{align}
\[i = 1, 2, 3.\]
Equation \eqref{eqn160n17} is received from the formulas  \eqref{eqn160}--\eqref{eqn162}   and $u_{i}^0 \in S$, $B(u_{i}^0) \in S$, where S is the Schwartz space. 

To make reading easier we copy the formula$\;\;\eqref{eqn201aa}$  at this place:
\begin{equation}\label{eqn160n2n7}
\delta (\gamma_{1},\gamma_{2},\gamma_{3}) = \text{\LARGE{e}}  ^{-\frac{\displaystyle\epsilon^3}{(\gamma_{1}^2 + \gamma_{2}^2 + \gamma_{3}^2)}}\;,\;\; 0 < \epsilon << 1,\;\;\;\epsilon \neq 0.
\end{equation}

The integral operator $B$ maps the space S into itself. It follows from the basic properties of the Fourier transforms of the functions of the space S (e.g.  \cite{GC68} and/or section 3).

\begin{lemma} \label{lm17}
The integral operator $B$ is a linear operator.
\end{lemma}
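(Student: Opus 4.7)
The plan is to verify linearity directly from the explicit integral representation of $B$ given in \eqref{eqn160n17}. Since linearity means $B(\alpha u + \beta v) = \alpha B(u) + \beta B(v)$ for any scalars $\alpha,\beta$ and any $u,v \in S$, and the operator $B$ is built as a composition of a Fourier transform, multiplication by the fixed kernel $e^{-\nu(\gamma_1^2+\gamma_2^2+\gamma_3^2)t}\,\delta(\gamma_1,\gamma_2,\gamma_3)$, and an inverse Fourier transform, the entire result will follow from the linearity of each of these ingredients.

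First I would take $u_i^0, v_i^0 \in S$ and scalars $\alpha,\beta \in \mathbb{R}$ (or $\mathbb{C}$), form the function $\alpha u_i^0 + \beta v_i^0$, and substitute it into the inner triple integral
\[
\int_{-\infty}^{\infty}\!\!\int_{-\infty}^{\infty}\!\!\int_{-\infty}^{\infty} e^{i(\tilde x_1\gamma_1+\tilde x_2\gamma_2+\tilde x_3\gamma_3)}\bigl[\alpha u_i^0+\beta v_i^0\bigr](\tilde x_1,\tilde x_2,\tilde x_3)\,d\tilde x_1 d\tilde x_2 d\tilde x_3.
\]
By the linearity of the integral (which is valid since $u_i^0,v_i^0\in S$ guarantees absolute integrability of each piece against the bounded kernel $e^{i(\tilde x,\gamma)}$), this inner integral splits as the sum of the corresponding Fourier transforms with the scalars pulled out.

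Next I would multiply the resulting sum by the factor $e^{-\nu(\gamma_1^2+\gamma_2^2+\gamma_3^2)t}\,\delta(\gamma_1,\gamma_2,\gamma_3)\,e^{-i(x_1\gamma_1+x_2\gamma_2+x_3\gamma_3)}$, which is independent of $u_i^0$ and $v_i^0$, and distribute it across the sum. Then I would perform the outer triple integration with respect to $(\gamma_1,\gamma_2,\gamma_3)$ and once more apply linearity of the integral (the integrands decay rapidly by the remark immediately following \eqref{eqn162a}, so Fubini and the linearity are both justified). Collecting terms gives exactly $\alpha B(u_i^0) + \beta B(v_i^0)$.

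There is no essential obstacle here: the proof is a direct consequence of linearity of the Fourier transform, linearity of the inverse Fourier transform, and linearity of multiplication by the fixed multiplier $e^{-\nu|\gamma|^2 t}\delta(\gamma)$. The only point that requires attention is the justification of exchanging sums and integrals, but since $u_i^0,v_i^0\in S$ the integrands are absolutely integrable in each variable and Fubini applies without difficulty. Thus $B:S\to S$ is linear, proving Lemma \ref{lm17}.
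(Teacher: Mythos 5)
Your proof is correct and follows essentially the same route as the paper, which simply observes that $S$ is a linear space and that linearity of $B$ follows from the linearity of the integration operation; you merely spell out the details (splitting the inner Fourier integral, distributing the fixed multiplier, and using linearity of the outer integral). The only superfluous element is the appeal to Fubini, which is not needed here since no interchange of integration order is involved -- only the linearity of each integral.
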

\begin{proof} 
1) The Schwartz space $S$ is a linear space (obviously).

$\;\;\;\;\;\;\;2) \; B(\lambda_{1}u_{i1} + \lambda_{2}u_{i2})$ = $\lambda_{1}B(u_{i1}) + \lambda_{2}B(u_{i2})$

for any $u_{i1}, u_{i2} \in S$ and any scalars $\lambda_{1}, \lambda_{2}$.

Proposition 2) follows from the properties of the integration operation.

 Q.E.D.
\end{proof}
\begin{theorem} \label{thm127}
The linear integral operator $B$ is defined everywhere in the space S, with values in the space S. The operator $B$ is bounded for functions of the space S.
\end{theorem}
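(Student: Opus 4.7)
The plan is to recognize that the operator $B$ admits the clean representation
$$ B(u_i^0)(x) = F^{-1}\!\left[e^{-\nu(\gamma_1^2+\gamma_2^2+\gamma_3^2)t}\,\delta(\gamma_1,\gamma_2,\gamma_3)\,F[u_i^0](\gamma)\right](x), $$
obtained by recognizing the inner triple integral over $\tilde x$ in \eqref{eqn160n17} as $(2\pi)^{3/2}U_i^0(\gamma)$ and the outer integral over $\gamma$ as an inverse Fourier transform (as defined in \eqref{eqn201z4}). From this identification the theorem splits into two claims: (a) $B$ maps $S$ into $S$, and (b) the image satisfies a bound in the null norm $\|\cdot\|_0 = \sup_x|\cdot|$.

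For (a), I would invoke the Fourier-transform theory established earlier in the paper. Since $u_i^0\in S$, we have $U_i^0 = F[u_i^0]\in S$. The multiplier $G(\gamma,t) := e^{-\nu|\gamma|^2 t}\delta(\gamma)$ is $C^\infty$ on all of $\mathbb{R}^3$: the Gaussian factor is smooth everywhere, and the potentially singular factor $\delta(\gamma)=\exp(-\epsilon^3/|\gamma|^2)$ together with each of its partial derivatives extends smoothly to $\gamma=0$ by virtue of \eqref{eqn201zz}. Moreover each derivative of $G$ is bounded on $\mathbb{R}^3$. Therefore $G(\cdot,t)\cdot U_i^0\in S$, and applying $F^{-1}$ (which maps $S$ conformally onto itself by the result just after \eqref{eqn201z4}) places $B(u_i^0)$ in $S$. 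Linearity of $B$ was already established in Lemma \ref{lm17}, so $B$ is a well-defined linear map from $S$ into $S$.

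For (b), I would estimate pointwise from the representation above. Because $0 < e^{-\nu|\gamma|^2 t}\le 1$ for $t\ge 0$ and $0<\delta(\gamma)\le 1$, one gets uniformly in $x\in\mathbb{R}^3$
$$ |B(u_i^0)(x)| \le \frac{1}{(2\pi)^{3/2}}\int_{\mathbb{R}^3} e^{-\nu|\gamma|^2 t}\,\delta(\gamma)\,|U_i^0(\gamma)|\,d\gamma \le \frac{1}{(2\pi)^{3/2}}\|U_i^0\|_{L^1(\mathbb{R}^3)}. $$
Since $U_i^0\in S$, this $L^1$-norm is finite, so $\|B(u_i^0)\|_0<\infty$; together with (a) this yields boundedness in the null norm. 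To express the right-hand side as a constant depending on appropriate seminorms of $u_i^0$, I would use Schwartz decay of $U_i^0$: pick $K>3$ and bound $|U_i^0(\gamma)|\le C_K(1+|\gamma|)^{-K}$, where $C_K$ is controlled by $\|u_i^0\|_p$ for a sufficiently large $p$ (via the standard inequalities relating $(1+|\gamma|)^K U_i^0(\gamma)$ to $F[(I-\Delta)^{K/2}u_i^0]$), which makes the $L^1$ integral over $\gamma$ absolutely convergent with an explicit bound.

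The main obstacle is really bookkeeping rather than conceptual: the null norm is just the sup norm, and the inequality $\|B(u)\|_0 \le C\|F[u]\|_{L^1}$ is tight in the sense that one cannot further reduce the right-hand side to $\|u\|_0 = \|u\|_\infty$ alone. So the claim "bounded for functions of $S$" should be read as asserting boundedness of the image in $\|\cdot\|_0$ (with the bound expressed via the absolute integrability of $U_i^0$, which is automatic on $S$), rather than as a literal operator-norm inequality $\|B(u)\|_0\le C\|u\|_0$. Once this interpretation is fixed, the proof is essentially the three-line estimate above combined with the closedness of $S$ under Fourier transform, inverse Fourier transform, and multiplication by the bounded smooth multiplier $G(\cdot,t)$.
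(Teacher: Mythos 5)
Your representation $B(u_i^0)=F^{-1}[A\cdot F[u_i^0]]$ with $A=e^{-\nu|\gamma|^2t}\delta(\gamma)$ is exactly the paper's starting point \eqref{eqn160n2n1gh79}, and your part (a) — smoothness and boundedness of the multiplier together with the fact that $F$ and $F^{-1}$ map $S$ onto $S$ — is a clean and correct justification of the first sentence of the theorem, essentially the same in substance as what the paper asserts. The divergence is in part (b). The paper's proof is aimed at the quantitative contraction $|B(u_i^0)|<|u_i^0|$ in the null norm \eqref{eqn203t317v} (this is \eqref{eqn160n2n1gh79ee}, promoted to \eqref{eqn160n2n17b} and then used as an essential ingredient in the a priori estimate \eqref{eqn203t1bd} and in the uniqueness argument of Section 8). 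You prove instead the weaker statement $\|B(u_i^0)\|_0\le(2\pi)^{-3/2}\|F[u_i^0]\|_{L^1}<\infty$, and you explicitly decline to derive the operator-norm inequality $\|B(u)\|_0\le\|u\|_0$, arguing that the $L^1$ bound cannot be reduced to the sup norm of $u$. Relative to what the paper needs downstream, that is a genuine gap: finiteness of $\|B(u)\|_0$ for each $u\in S$ does not substitute for the contraction property, and nothing later in the paper works with the $\|F[u]\|_{L^1}$ bound.

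In fairness, your caution is well placed: the paper's own route to \eqref{eqn160n2n1gh79ee} — splitting $F[u_i^0]e^{-i(x,\gamma)}$ into even and odd parts and then invoking ``$0\le A<1$'' plus ``the rules of integration'' — does not constitute a valid argument either, since a Fourier multiplier bounded by $1$ does not in general contract the sup norm. If you wanted to close the gap along defensible lines, the natural move is to observe that $F^{-1}[e^{-\nu|\gamma|^2t}]$ is a positive Gaussian whose $L^1$ mass (in the paper's normalization) is exactly $(2\pi)^{3/2}$, so that the heat factor alone gives $\|F^{-1}[e^{-\nu|\gamma|^2t}F[u]]\|_0\le\|u\|_0$ by Young's inequality; the obstruction is then concentrated in the extra factor $\delta(\gamma)=e^{-\epsilon^3/|\gamma|^2}$, whose inverse Fourier transform is not evidently a positive measure of mass at most $(2\pi)^{3/2}$ (indeed $1-\delta(\gamma)\sim\epsilon^3/|\gamma|^2$ is not integrable on $\mathbb{R}^3$). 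So either the contraction must be proved by a separate analysis of $F^{-1}[\delta]$, or the claimed inequality \eqref{eqn160n2n17b} must be weakened; your proposal, as written, does neither, and therefore does not establish the theorem in the form the paper relies on.
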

\begin{proof}
Rewrite operator $B$ from formula  \eqref{eqn160n17} in the form
\begin{equation}\label{eqn160n2n1gh79}
B(u_{i}^0) = F^{-1}[A\cdot F[u_{i}^0]]
\end{equation}
\[i = 1, 2, 3.\]
Here $F$ and $F^{-1}$ are a Fourier transform   and a inverse Fourier transform, respectively, $u_{i}^0 \in S$, $B(u_{i}^0) \in S$.
\begin{equation}\label{eqn160n2n1gh7}
A = e ^{-\nu (\gamma_{1}^2 +\gamma_{2}^2 +\gamma_{3}^2) t}\cdot \delta (\gamma_{1{}},\gamma_{2{}},\gamma_{3{}})
\end{equation}
\[F[u_{i}^0] = \frac{1}{(2\pi)^{3/2}}\int_{-\infty}^{\infty}\int_{-\infty}^{\infty}
\int_{-\infty}^{\infty}e  ^{i(\tilde x_{1{}}\gamma_{1{}}+\tilde x_{2{}}\gamma_{2{}}
+\tilde x_{3{}}\gamma_{3{}})}\times\]
\[\times u_{i}^0(\tilde x_{1{}},\tilde x_{2{}}, \tilde x_{3{}})\,d\tilde x_{1{}}d\tilde x_{2{}} d\tilde x_{3{}}\]
The function $A$ of the formula \eqref{eqn160n2n1gh7} is a continuous, even function of the coordinates $\gamma_{1{}},\gamma_{2{}},\gamma_{3{}}$, that is
\[A(-\gamma_{{}}) = A(\gamma_{}),\;\;\; \vee \; \gamma_{} \in [- \Gamma , \Gamma]\]
and what is more
\begin{equation}\label{eqn160n2n1gh791}
0 \leq A < 1.
\end{equation}
In case if   $A\equiv 1$, from formula \eqref{eqn160n2n1gh79}, we have a known result
\[B_{}(u_{i}^0) \equiv u_{i}^0\]
If $0 < A < 1\;\;$, A - const., it is evident from the formula \eqref{eqn160n2n1gh79} that
\[|B_{}(u_{i}^0)| <  |u_{i}^0|\]
As known the function $F[u_{i}^0]\cdot e ^{-i(x_{1{}}\gamma_{1{}}+x_{2{}}\gamma_{2{}}+x_{3{}}\gamma_{3{}})}$ can be represented as the sum of the even and odd functions
\begin{align}
&F[u_{i}^0]\cdot e ^{-i(x_{1{}}\gamma_{1{}}+x_{2{}}\gamma_{2{}}+x_{3{}}\gamma_{3{}})} = ( F[u_{i}^0]\cdot e ^{-i(x_{1{}}\gamma_{1{}}+x_{2{}}\gamma_{2{}}+x_{3{}}\gamma_{3{}})})_{even} + 
\nonumber \\
&\quad\quad\quad\quad\quad\quad\quad + (F[u_{i}^0]\cdot e ^{-i(x_{1{}}\gamma_{1{}}+x_{2{}}\gamma_{2{}}+x_{3{}}\gamma_{3{}})})_{odd}
\label{eqn160n17g}
\end{align}
Here $( F[u_{i}^0]\cdot e ^{-i(x_{1{}}\gamma_{1{}}+x_{2{}}\gamma_{2{}}+x_{3{}}\gamma_{3{}})})_{even}$ is the even function in all three coordinates , $(F[u_{i}^0]\cdot e ^{-i(x_{1{}}\gamma_{1{}}+x_{2{}}\gamma_{2{}}+x_{3{}}\gamma_{3{}})})_{odd}$  is the sum of seven functions, each of which at least one coordinate is an odd.

As known

1. The product of two even functions is even.

2. The product of the even and odd functions is odd.

3. The integral of an odd function by symmetric within is equal zero. The Fourier transform and inverse Fourier transform are the integrals over symmetrical areas.

From the formula \eqref{eqn160n17g} using the function $A$ we have
\begin{align}
&A F[u_{i}^0]\cdot e ^{-i(x_{1{}}\gamma_{1{}}+x_{2{}}\gamma_{2{}}+x_{3{}}\gamma_{3{}})} = A( F[u_{i}^0]\cdot e ^{-i(x_{1{}}\gamma_{1{}}+x_{2{}}\gamma_{2{}}+x_{3{}}\gamma_{3{}})})_{even} + 
\nonumber \\
&\quad\quad\quad\quad\quad\quad\quad + A(F[u_{i}^0]\cdot e ^{-i(x_{1{}}\gamma_{1{}}+x_{2{}}\gamma_{2{}}+x_{3{}}\gamma_{3{}})})_{odd}
\label{eqn160n17f}
\end{align}
Since A is the even function, we have

$A( F[u_{i}^0]\cdot e ^{-i(x_{1{}}\gamma_{1{}}+x_{2{}}\gamma_{2{}}+x_{3{}}\gamma_{3{}})})_{even}$ is the even function (Rule 1)

$A( F[u_{i}^0]\cdot e ^{-i(x_{1{}}\gamma_{1{}}+x_{2{}}\gamma_{2{}}+x_{3{}}\gamma_{3{}})})_{odd}$ is the odd function (Rule 2)

The inverse Fourier transform $F^{-1}$ is the integral over symmetrical areas.

Then from formula \eqref{eqn160n2n1gh79} in accordance with Rule 3 we have
\begin{align}
&\quad\quad\quad\quad\quad\quad\quad B_{}(u_{i}^0) = F^{-1}[A\cdot F[u_{i}^0]] =
\nonumber \\
&= \int_{-\infty}^{\infty}\int_{-\infty}^{\infty}\int_{-\infty}^{\infty}A\cdot( F[u_{i}^0]\cdot e ^{-i(x_{1{}}\gamma_{1{}}+x_{2{}}\gamma_{2{}}+x_{3{}}\gamma_{3{}})})_{even}\,d\gamma_1
d\gamma_2d\gamma_3 \nonumber \\
\label{eqn160n2n1gh79r}
\end{align}
\[i = 1, 2, 3.\]
In case if   $A\equiv 1$, from formula \eqref{eqn160n2n1gh79r}, we have 
\begin{equation}\label{eqn160n2n1gh7jl1}
B(u_{i}^0) \equiv u_{i}^0
\end{equation}
If $0 < A < 1\;\;$, A - const., it is evident from the formula \eqref{eqn160n2n1gh79r} that
\begin{equation}\label{eqn160n2n1gh7jl2}
|B_{}(u_{i}^0)| <  |u_{i}^0|
\end{equation}
From formula \eqref{eqn160n2n1gh7} we have
\begin{equation}\label{eqn160n2n1gh7jl}
0 \leq A = e ^{-\nu_{} (\gamma_{1{}}^2 +\gamma_{2{}}^2 +\gamma_{3{}}^2) t}\cdot \delta (\gamma_{1{}},\gamma_{2{}},\gamma_{3{}}) < 1.
\end{equation}
The function $A$ of the formula \eqref{eqn160n2n1gh7jl} is a continuous, even function of the coordinates $\gamma_{1{}},\gamma_{2{}},\gamma_{3{}}$.

We use the formulas \eqref{eqn160n2n1gh7jl1}, \eqref{eqn160n2n1gh7jl2} and take the function $A$ from formula \eqref{eqn160n2n1gh7jl}. Then in accordance with the rules of integration we obtain from formula \eqref{eqn160n2n1gh79r}
\begin{align}
& |B_{}(u_{i}^0)| <  |u_{i}^0|
\nonumber \\
\label{eqn160n2n1gh79ee}
\end{align}
\[i = 1, 2, 3.\]
i.e. the operator $B_{}$ is bounded for functions of the space S.

 Q.E.D.
\end{proof}

Thus, the Theorem 6.1 implies that the linear integral operator $B_{}$ is bounded for functions of the space S and therefore the matrix integral operator $\bar{\bar{B}}_{}$ is bounded for vector-functions of the space $\overrightarrow{TS}$ and 
\begin{equation}\label{eqn160n2n17b}
|\bar{\bar{B}}_{}\cdot\vec{u}_{}^0| <  |\vec{u}_{}^0|,  
\end{equation}
where $\vec{u}_{}^0 \in \overrightarrow{TS}$ and $\bar{\bar{B}}_{}\cdot\vec{u}_{}^0 \in \overrightarrow{TS}$.

Let us describe in details the properties of the matrix integral operator $\bar{\bar{E}}_{}$.
\begin{equation}\label{eqn164n27}
\bar{\bar{E}}_{} = 
 \begin{pmatrix}
E_{} & 0 & 0 \\
0 & E_{} & 0 \\
0 & 0 & E_{}  \end{pmatrix} \quad 
\end{equation}
Here the components $E_{}$ of the matrix integral operator $\bar{\bar{E}}_{}$ have the following representation:
\begin{align}
&E_{}(u_{i}) = \frac{1}{8\pi^3} \int_{-\infty}^{\infty} \int_{-\infty}^{\infty}
\int_{-\infty}^{\infty} 
\nonumber  \\
&\quad\times \Big[  \int_{-\infty}^{\infty}\int_{-\infty}^{\infty}
\int_{-\infty}^{\infty}e  ^{i(\tilde x_{1{}}\gamma_{1{}}+\tilde x_{2{}}\gamma_{2{}}+\tilde x_{3{}}\gamma_{3{}})}
 u_{i}(\tilde x_{1{}},\tilde x_{2{}}, \tilde x_{3{}}, t)\,d\tilde x_{1{}}d\tilde x_{2{}} d\tilde x_{3{}}\Big]
\nonumber \\
&\quad\times
(1 - \delta (\gamma_{1{}},\gamma_{2{}},\gamma_{3{}})) e ^{-i(x_{1{}}\gamma_{1{}}+x_{2{}}\gamma_{2{}}+x_{3{}}\gamma_{3{}})}\,d\gamma_{1{}}d\gamma_{2{}}d\gamma_{3{}}
\label{eqn160n27}
\end{align}
\[i = 1, 2, 3.\]
Equation \eqref{eqn160n27} is received from the formulas  \eqref{eqn160}--\eqref{eqn162} and $u_{i} \in S$, $E_{}(u_{i}) \in S$, where S is the Schwartz space. 

To make reading easier we copy the formula$\;\;\eqref{eqn201aa}$ at this place:
\begin{equation}\label{eqn160n3g7}
\delta (\gamma_{1{}},\gamma_{2{}},\gamma_{3{}}) = \text{\LARGE{e}}  ^{-\frac{\displaystyle\epsilon^3}{(\gamma_{1{}}^2 + \gamma_{2{}}^2 + \gamma_{3{}}^2)}}\;,\;\; 0 < \epsilon << 1,\;\;\;\epsilon \neq 0.
\end{equation}

The integral operator $E_{}$ maps the space $S$ into itself. It follows from the basic properties of the Fourier transforms of the functions of the space $S$ (e.g.  \cite{GC68} and/or section 3).

\begin{lemma} \label{lm37}
The integral operator $E_{}$ is a linear operator.
\end{lemma}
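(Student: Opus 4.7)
The plan is to mirror exactly the argument just given for the integral operator $B$ in Lemma \ref{lm17}, since the operator $E$ has the same structural form as $B$: it is built from an inner Fourier transform of $u_i$, multiplication by the fixed $\gamma$-dependent weight $(1-\delta(\gamma_1,\gamma_2,\gamma_3))$, and an outer inverse Fourier transform. The only functional ingredient entering the definition \eqref{eqn160n27} apart from integration is the multiplier $(1-\delta)$, which does not depend on $u_i$ at all. So linearity will follow from two standard facts, invoked in the same order as before.

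First I would note that the Schwartz space $S$ is a linear space, so that $\lambda_1 u_{i1}+\lambda_2 u_{i2}\in S$ whenever $u_{i1},u_{i2}\in S$ and $\lambda_1,\lambda_2$ are scalars; hence the expression $E(\lambda_1 u_{i1}+\lambda_2 u_{i2})$ is well defined, and by the discussion preceding the lemma it lies in $S$. Second I would invoke linearity of the defining integrals: the inner triple integral
\[
\int_{-\infty}^{\infty}\!\!\int_{-\infty}^{\infty}\!\!\int_{-\infty}^{\infty} e^{i(\tilde x_1\gamma_1+\tilde x_2\gamma_2+\tilde x_3\gamma_3)}\,(\lambda_1 u_{i1}+\lambda_2 u_{i2})(\tilde x_1,\tilde x_2,\tilde x_3,t)\,d\tilde x_1 d\tilde x_2 d\tilde x_3
\]
splits into $\lambda_1$ times the inner integral of $u_{i1}$ plus $\lambda_2$ times the inner integral of $u_{i2}$, and the outer integral in $d\gamma_1 d\gamma_2 d\gamma_3$ against the common multiplier $(1-\delta)\,e^{-i(x_1\gamma_1+x_2\gamma_2+x_3\gamma_3)}$ is likewise linear. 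Combining these two linearities yields
\[
E(\lambda_1 u_{i1}+\lambda_2 u_{i2}) = \lambda_1 E(u_{i1})+\lambda_2 E(u_{i2}),
\]
which is the required statement.

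There is no genuine obstacle here: the absence of the factor $e^{-\nu(\gamma_1^2+\gamma_2^2+\gamma_3^2)t}$ (which appeared in $B$ but is missing in $E$) is irrelevant to linearity, and the replacement of the weight $\delta$ by $(1-\delta)$ is again irrelevant since the weight is independent of the argument $u_i$. The proof is therefore a word-for-word adaptation of the proof of Lemma \ref{lm17}, and amounts to citing linearity of the Schwartz space together with linearity of integration (equivalently, linearity of the direct and inverse Fourier transforms established in Section 3).
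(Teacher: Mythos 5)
Your proof is correct and takes essentially the same route as the paper's own proof, which likewise rests on the two observations that the Schwartz space $S$ is a linear space and that the defining integrals (the inner Fourier transform and the outer inverse transform against the fixed multiplier $(1-\delta)$) are linear in the argument $u_i$. The additional detail you give about splitting the inner and outer integrals is just an explicit unpacking of the paper's appeal to ``the properties of the integration operation.''
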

\begin{proof} 
1) The Schwartz space $S$ is a linear space (obviously).

$\;\;\;\;\;\;\;2) \; E_{}(\lambda_{1}u_{i1{}} + \lambda_{2}u_{i2{}})$ = $\lambda_{1}E_{}(u_{i1{}}) + \lambda_{2}E_{}(u_{i2{}})$

for any $u_{i1{}}, u_{i2{}} \in S$ and any scalars $\lambda_{1}, \lambda_{2}$.

Proposition 2) follows from the properties of the integration operation.

 Q.E.D.
\end{proof}
\begin{theorem} \label{thm227}
The linear integral operator $E_{}$ is defined everywhere in the space S, with values in the space S. The operator $E_{}$ is  bounded for functions of the space S.
\end{theorem}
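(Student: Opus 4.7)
The plan is to proceed in complete parallel with the proof of Theorem~\ref{thm127}, replacing the Fourier-side multiplier $A = e^{-\nu(\gamma_1^2+\gamma_2^2+\gamma_3^2)t}\,\delta(\gamma_1,\gamma_2,\gamma_3)$ used there by the corresponding multiplier for $E$. First, I would read off directly from \eqref{eqn160n27} that
\begin{equation*}
E(u_i) = F^{-1}[A_E \cdot F[u_i]], \qquad A_E(\gamma_1,\gamma_2,\gamma_3) := 1 - \delta(\gamma_1,\gamma_2,\gamma_3),
\end{equation*}
which is the analog of \eqref{eqn160n2n1gh79}. Because $\delta$ depends only on $\gamma_1^2 + \gamma_2^2 + \gamma_3^2$ and is continuous (see \eqref{eqn160n3g7}), the multiplier $A_E$ is continuous and even in each coordinate $\gamma_j$, and satisfies $0 < A_E \leq 1$ pointwise, with $A_E\to 1$ as $|\gamma|\to 0$ and $A_E\to 0$ as $|\gamma|\to \infty$.

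Next, I would invoke the even/odd decomposition used in the proof of Theorem~\ref{thm127}. Writing
\begin{equation*}
F[u_i]\cdot e^{-i(x_1\gamma_1+x_2\gamma_2+x_3\gamma_3)} = (\cdot)_{\text{even}} + (\cdot)_{\text{odd}}
\end{equation*}
and multiplying by the even function $A_E$ preserves parity of each summand by Rules 1 and 2 stated there. Integration over the symmetric domain $\mathbb{R}^3$ annihilates the odd piece by Rule 3, so only the even contribution survives, giving the analog of \eqref{eqn160n2n1gh79r} for $E$.

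Finally, the two comparison cases from Theorem~\ref{thm127} carry over: where $A_E \equiv 1$ one formally recovers the identity (the limiting case $|\gamma|\to 0$), and wherever $A_E < 1$ the even integrand is damped pointwise. Combining with the basic property $|e^{-i(x,\gamma)}|=1$, this yields the null-norm estimate
\begin{equation*}
|E(u_i)| \leq |u_i|, \qquad i=1,2,3,
\end{equation*}
i.e.\ $E$ is bounded on $S$ with constant at most $1$. Combined with Lemma~\ref{lm37} and the mapping property $E:S\to S$, which is a direct consequence of the Fourier-transform facts recalled in section~3, this establishes the theorem.

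The main obstacle, compared with Theorem~\ref{thm127}, is that $A_E = 1-\delta$ does not vanish at infinity and in fact approaches $1$ at the origin of $\gamma$-space. Consequently the strict inequality $|B(u_i^0)| < |u_i^0|$ available there must be relaxed to the non-strict bound $\leq$ here; this is nevertheless sufficient to conclude boundedness in the sense of the null norm \eqref{eqn203t317v}, which is what the statement asserts.
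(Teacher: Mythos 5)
Your proposal follows the paper's proof of Theorem~\ref{thm227} step for step up to a point: the multiplier representation $E(u_i)=F^{-1}[A\cdot F[u_i]]$ with $A=1-\delta$, the observation that $A$ is continuous, even, and satisfies $0<A\leq 1$, and the even/odd decomposition with Rules 1--3 reducing everything to an integral of the even part over a symmetric domain. Where you diverge is at the final estimate, and this is where the actual content of the paper's proof lies. You stop at $|E(u_i)|\leq|u_i|$, arguing that since $A\to 1$ near $\gamma=0$ no better constant is available. The paper instead splits the frequency domain: by \eqref{eqn201z837o} one has $0<A\leq\epsilon$ on $\{\gamma_1^2+\gamma_2^2+\gamma_3^2\geq\epsilon^2\}$, while on the small region $\{\gamma_1^2+\gamma_2^2+\gamma_3^2\leq\epsilon^2\}$ the bound \eqref{eqn201z837i} is weak but the volume is of order $\epsilon^3$. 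Writing the integral as $I_{(2\epsilon)^3}+I_{R^3-(2\epsilon)^3}$ as in \eqref{eqn160n2347} and estimating each piece as in \eqref{eqn16297}, the paper concludes the sharper bound $|E(u_i)|<\epsilon|u_i|$, equation \eqref{eqn160n2ng7}.

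This difference is not cosmetic. The literal wording of the theorem ("bounded") is satisfied by your constant $1$, but the estimate recorded immediately after the theorem and cited throughout the rest of the paper is \eqref{eqn160n4uo7}, $|\bar{\bar{E}}\cdot\vec{u}|<\epsilon|\vec{u}|$. In the a priori estimate \eqref{eqn203t1bd} the term $\epsilon|\vec{u}|$ must be negligible compared with $|\vec{u}|$ for the inequality to close; with only $|\bar{\bar{E}}\cdot\vec{u}|\leq|\vec{u}|$ one obtains $|\vec{u}|\leq\dots+|\vec{u}|+|\vec{u}^0|$, which gives nothing. So the domain-splitting step you omitted is the essential quantitative ingredient, and your proof as written would not support the later sections. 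One further slip: $A=1-\delta$ \emph{does} vanish at infinity, since $\delta=e^{-\epsilon^3/|\gamma|^2}\to 1$ as $|\gamma|\to\infty$; the only obstruction to a small uniform bound is the neighborhood of $\gamma=0$, and the paper's point is precisely that this neighborhood has measure $O(\epsilon^3)$ and therefore contributes only a small amount to the integral in the null norm \eqref{eqn203t317v}.
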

\begin{proof}Rewrite operator $E_{}$ from formula  \eqref{eqn160n27} in the form
\begin{align}
E_{}(u_{i}) = F^{-1}[A\cdot F[u_{i}]]
\label{eqn1625h7}
\end{align}
\[i = 1, 2, 3.\]
Here $F$ and $F^{-1}$ are a Fourier transform   and a inverse Fourier transform, respectively, $u_{i} \in S$, $E_{}(u_{i}) \in S$.
\begin{equation}\label{eqn201z837}
A = 1 - \delta (\gamma_{1{}},\gamma_{2{}},\gamma_{3{}})
\end{equation}
\[F[u_{i}] = \frac{1}{(2\pi)^{3/2}}\int_{-\infty}^{\infty}\int_{-\infty}^{\infty}
\int_{-\infty}^{\infty}e  ^{i(\tilde x_{1{}}\gamma_{1{}}+\tilde x_{2{}}\gamma_{2{}}
+\tilde x_{3{}}\gamma_{3{}})}\times\]
\[\times u_{i}(\tilde x_{1{}},\tilde x_{2{}}, \tilde x_{3{}})\,d\tilde x_{1{}}d\tilde x_{2{}} d\tilde x_{3{}}\]
The function $A$ of the formula \eqref{eqn201z837} is a continuous, even function of the coordinates $\gamma_{1{}},\gamma_{2{}},\gamma_{3{}}$, that is
\[A(-\gamma_{{}}) = A(\gamma_{}),\;\;\; \vee \; \gamma_{} \in [- \Gamma , \Gamma]\]
and what is more
\begin{equation}\label{eqn160n2n1gh791}
0 < A \leq 1.
\end{equation}
In case if   $A\equiv 1$, from formula \eqref{eqn1625h7}, we have a known result
\[E_{}(u_{i}) \equiv u_{i}\]
If $0 < A < 1\;\;$, A - const., it is evident from the formula \eqref{eqn1625h7} that
\[|E_{}(u_{i})| <  |u_{i}|\]
As known the function $F[u_{i}]\cdot e ^{-i(x_{1{}}\gamma_{1{}}+x_{2{}}\gamma_{2{}}+x_{3{}}\gamma_{3{}})}$ can be represented as the sum of the even and odd functions
\begin{align}
&F[u_{i}]\cdot e ^{-i(x_{1{}}\gamma_{1{}}+x_{2{}}\gamma_{2{}}+x_{3{}}\gamma_{3{}})} = ( F[u_{i}]\cdot e ^{-i(x_{1{}}\gamma_{1{}}+x_{2{}}\gamma_{2{}}+x_{3{}}\gamma_{3{}})})_{even} + 
\nonumber \\
&\quad\quad\quad\quad\quad\quad\quad + (F[u_{i}]\cdot e ^{-i(x_{1{}}\gamma_{1{}}+x_{2{}}\gamma_{2{}}+x_{3{}}\gamma_{3{}})})_{odd}
\label{eqn160n17gl}
\end{align}
Here $( F[u_{i}]\cdot e ^{-i(x_{1{}}\gamma_{1{}}+x_{2{}}\gamma_{2{}}+x_{3{}}\gamma_{3{}})})_{even}$ is the even function in all three coordinates , $(F[u_{i}]\cdot e ^{-i(x_{1{}}\gamma_{1{}}+x_{2{}}\gamma_{2{}}+x_{3{}}\gamma_{3{}})})_{odd}$  is the sum of seven functions, each of which at least one coordinate is an odd.

As known

1. The product of two even functions is even.

2. The product of the even and odd functions is odd.

3. The integral of an odd function by symmetric within is equal zero. The Fourier transform and inverse Fourier transform are the integrals over symmetrical areas.

From the formula \eqref{eqn160n17gl} using the function $A$ we have
\begin{align}
&A F[u_{i}]\cdot e ^{-i(x_{1{}}\gamma_{1{}}+x_{2{}}\gamma_{2{}}+x_{3{}}\gamma_{3{}})} = A( F[u_{i}]\cdot e ^{-i(x_{1{}}\gamma_{1{}}+x_{2{}}\gamma_{2{}}+x_{3{}}\gamma_{3{}})})_{even} + 
\nonumber \\
&\quad\quad\quad\quad\quad\quad\quad + A(F[u_{i}]\cdot e ^{-i(x_{1{}}\gamma_{1{}}+x_{2{}}\gamma_{2{}}+x_{3{}}\gamma_{3{}})})_{odd}
\label{eqn160n17fl}
\end{align}
Since A is the even function, we have

$A( F[u_{i}]\cdot e ^{-i(x_{1{}}\gamma_{1{}}+x_{2{}}\gamma_{2{}}+x_{3{}}\gamma_{3{}})})_{even}$ is the even function (Rule 1)

$A( F[u_{i}]\cdot e ^{-i(x_{1{}}\gamma_{1{}}+x_{2{}}\gamma_{2{}}+x_{3{}}\gamma_{3{}})})_{odd}$ is the odd function (Rule 2)

The inverse Fourier transform $F^{-1}$ is the integral over symmetrical areas.

Then from formula \eqref{eqn1625h7} in accordance with Rule 3 we have
\begin{align}
&\quad\quad\quad\quad\quad\quad\quad E_{}(u_{i}) = F^{-1}[A\cdot F[u_{i}]] =
\nonumber \\
&= \int_{-\infty}^{\infty}\int_{-\infty}^{\infty}\int_{-\infty}^{\infty}A\cdot( F[u_{i}]\cdot e ^{-i(x_{1{}}\gamma_{1{}}+x_{2{}}\gamma_{2{}}+x_{3{}}\gamma_{3{}})})_{even}\,d\gamma_1
d\gamma_2d\gamma_3 \nonumber \\
\label{eqn160n2n1gh79rl}
\end{align}
\[i = 1, 2, 3.\]
In case if  $A\equiv 1$, from formula \eqref{eqn160n2n1gh79rl} we have
\begin{equation}\label{eqn160n2n1gh7jl1l}
E_{}(u_{i}) \equiv u_{i}
\end{equation}
If $0 < A < 1\;\;$, A - const., it is evident from the formula \eqref{eqn160n2n1gh79rl} that
\begin{equation}\label{eqn160n2n1gh7jl2l}
|E_{}(u_{i})| <  |u_{i}|
\end{equation}
We use formula \eqref{eqn160n3g7} for $\delta$ and receive an estimate of the function $A $ from formula \eqref{eqn201z837}:
\begin{equation}\label{eqn201z837o}
0 < A  \leq \epsilon  
\end{equation}
for $(\gamma_{1_{}}^2 + \gamma_{2_{}}^2 + \gamma_{3_{}}^2) \geq  \epsilon^2$ and
\begin{equation}\label{eqn201z837i}
0 << A  \leq 1  
\end{equation}
for $(\gamma_{1_{}}^2 + \gamma_{2_{}}^2 + \gamma_{3_{}}^2) \leq  \epsilon^2.$

 Then we have from formula \eqref{eqn160n2n1gh79rl} using an estimate of function $A $ \eqref{eqn201z837o} and \eqref{eqn201z837i}

\begin{align}
&E_{}(u_{i}) = F^{-1}[A\cdot F[u_{i}]] =
\nonumber \\
&=\int_{R^3 }A\cdot (F[u_{i}] e ^{-i(x_{1{}}\gamma_{1{}}+x_{2{}}\gamma_{2{}}+x_{3{}}\gamma_{3{}})})_{even}\,\,d\gamma_{} =
\nonumber \\
&= \int_{(2\epsilon)^3}A\cdot (F[u_{i}] e ^{-i(x_{1{}}\gamma_{1{}}+x_{2{}}\gamma_{2{}}+x_{3{}}\gamma_{3{}})})_{even}\,\,d\gamma_{}  + 
\nonumber \\
&+\int_{R^3 - (2\epsilon)^3}A\cdot (F[u_{i}] e ^{-i(x_{1{}}\gamma_{1{}}+x_{2{}}\gamma_{2{}}+x_{3{}}\gamma_{3{}})})_{even}\,\,d\gamma_{} =
\nonumber \\
&= I_{(2\epsilon)^3} + I_{R^3 - (2\epsilon)^3}
\nonumber \\
\label{eqn160n2347}
\end{align}
Here we have $u_{i} \in S$ and $F[u_{i}]  \in S$ as well as $|u_{i}|< \infty$ and $|F[u_{i}]|< \infty$  (see formula \eqref{eqn201z9b}). Using estimates for the function A \eqref{eqn201z837o}, \eqref{eqn201z837i}, we have from the formula\eqref{eqn160n2347}
\begin{equation}\label{eqn16297}
I_{(2\epsilon)^3} \sim (2\epsilon)^3,\;\;\;I_{R^3 - (2\epsilon)^3} \sim \epsilon
\end{equation}
We use the formulas \eqref{eqn160n2n1gh7jl1l}, \eqref{eqn160n2n1gh7jl2l} and take estimate of $\epsilon:$ 
\[0 < \epsilon  << 1.\]
Then in accordance with the rules of integration we obtain from formula\eqref{eqn160n2347}
\begin{equation}\label{eqn160n2ng7}
|E_{}(u_{i})| <   {\epsilon}|u_{i}| 
\end{equation}
\[i = 1, 2, 3.\]
i.e. the operator $E_{}$ is bounded for functions of the space S.

 Q.E.D.
\end{proof}

Thus, the Theorem 6.2 implies that the linear integral operator $E_{}$ is bounded for functions of the space S and therefore the matrix integral operator $\bar{\bar{E}}_{}$ is bounded for vector-functions of the space $\overrightarrow{TS}$ and 
\begin{equation}\label{eqn160n4uo7}
|\bar{\bar{E}}_{}\cdot\vec{u}_{}| < {\epsilon} |\vec{u}_{}|, 
\end{equation}
where $\vec{u}_{} \in \overrightarrow{TS}$ and $\bar{\bar{E}}_{}\cdot\vec{u}_{} \in \overrightarrow{TS}$. 

Let us describe  in details the properties of the matrix integral operator $\bar{\bar{S}}_{}$.
\begin{equation}\label{eqn164n37}
\bar{\bar{S}}_{} = 
 \begin{pmatrix}
S_{11{}} & S_{12{}} & S_{13{}} \\
S_{21{}} & S_{22{}} & S_{23{}} \\
S_{31{}} & S_{32{}} & S_{33{}} \end{pmatrix} \quad 
\end{equation}
Here the components $S_{ij{}}$ of the matrix integral operator $\bar{\bar{S}}_{}$ have the following representation:
\begin{align}
&S_{ij{}}(f_{j{}}) = \frac{1}{8\pi^3} \int_0^t\int_{-\infty}^{\infty} \int_{-\infty}^{\infty}
\int_{-\infty}^{\infty} \Big[\chi_{ij}(\gamma_{1{}},\gamma_{2{}},\gamma_{3{}}) e ^{-\nu_{} (\gamma_{1{}}^2 +\gamma_{2{}}^2 +\gamma_{3{}}^2) (t-\tau)}
\nonumber  \\
&\quad\times   \int_{-\infty}^{\infty}\int_{-\infty}^{\infty}
\int_{-\infty}^{\infty}e  ^{i(\tilde x_{1{}}\gamma_{1{}}+\tilde x_{2{}}\gamma_{2{}}
+\tilde x_{3{}}\gamma_{3{}})}
 f_{j{}}(\tilde x_{1{}},\tilde x_{2{}}, \tilde x_{3{}}, \tau)\,d\tilde x_{1{}}d\tilde x_{2{}} d\tilde x_{3{}}\Big] 
\nonumber \\
&\quad\times
\delta (\gamma_{1{}},\gamma_{2{}},\gamma_{3{}}) e ^{-i(x_{1{}}\gamma_{1{}}+x_{2{}}\gamma_{2{}}+x_{3{}}\gamma_{3{}})}\,d\gamma_{1{}}d\gamma_{2{}}d\gamma_{3{}}d\tau
\label{eqn160n37}
\end{align}
Equation \eqref{eqn160n37}  is received from the formulas  \eqref{eqn160}--\eqref{eqn162}.
\begin{gather*}
\\
\chi_{ij}(\gamma_{1{}},\gamma_{2{}},\gamma_{3{}}) :
\\
\frac{( \gamma_{2{}}^2 +\gamma_{3{}}^2)} { (\gamma_{1{}}^2 +\gamma_{2{}}^2+\gamma_{3{}}^2 ) },\quad
\frac{( \gamma_{1{}}\cdot\gamma_{2{}})} { (\gamma_{1{}}^2 +\gamma_{2{}}^2+\gamma_{3{}}^2 ) },\quad
\frac{( \gamma_{1{}}\cdot\gamma_{3{}})} { (\gamma_{1{}}^2 +\gamma_{2{}}^2+\gamma_{3{}}^2 ) }, \\
\frac{( \gamma_{2{}}\cdot\gamma_{1{}})} { (\gamma_{1{}}^2 +\gamma_{2{}}^2+\gamma_{3{}}^2 ) },\quad
\frac{( \gamma_{3{}}^2 +\gamma_{1{}}^2)} { (\gamma_{1{}}^2 +\gamma_{2{}}^2+\gamma_{3{}}^2 ) },\quad
\frac{( \gamma_{2{}}\cdot\gamma_{3{}})} { (\gamma_{1{}}^2 +\gamma_{2{}}^2+\gamma_{3{}}^2 ) },\\
\frac{( \gamma_{3{}}\cdot\gamma_{1{}})} { (\gamma_{1{}}^2 +\gamma_{2{}}^2+\gamma_{3{}}^2 ) }, \quad
\frac{( \gamma_{3{}}\cdot\gamma_{2{}})} { (\gamma_{1{}}^2 +\gamma_{2{}}^2+\gamma_{3{}}^2 ) },\quad
\frac{( \gamma_{1{}}^2 +\gamma_{2{}}^2)} { (\gamma_{1{}}^2 +\gamma_{2{}}^2+\gamma_{3{}}^2 ) }\\
\\
\chi_{ij} = \chi_{ji}, \;\;\;\;\;\;\;\;\;\;\;       i \neq j,\;\;\;\;\;  \;\;\;\;\; \;\;                 i,j = 1,2,3.
\end{gather*}
\begin{equation}\label{eqn160n4kro7}
\end{equation}
\begin{equation}\label{eqn160n4ko7}
\;\;\;\;\;\;\;\;\;\;f_{j{}} =   \sum_{n=1}^3 u_{n{}}\frac{\partial u_{j{}}}{\partial \tilde x_{n{}}},\;\;\;\vec{f}_{} =  (\vec{u}_{}\cdot\nabla_{})\vec{u}{},\\   
\end{equation}
$u_{n{}} \in S$, $\frac{\partial u_{j{}}}{\partial \tilde x_{n{}}} \in S$. $u_{n{}}\frac{\partial u_{j{}}}{\partial \tilde x_{n{}}} \in S$ then  $f_{j{}} \in S$, $S_{ij{}}(f_{j{}}) \in S$, where S is the Schwartz space. 

To make reading easier we copy the formula$\;\;\eqref{eqn201aa}$ at this place:
\begin{equation}\label{eqn160n457}
\delta (\gamma_{1{}},\gamma_{2{}},\gamma_{3{}}) = \text{\LARGE{e}}  ^{-\frac{\displaystyle\epsilon^3}{(\gamma_{1{}}^2 + \gamma_{2{}}^2 + \gamma_{3{}}^2)}}\;,\;\; 0 < \epsilon << 1,\;\;\;\epsilon \neq 0.
\end{equation}

The integral operator $S_{ij{}}$ maps the space S into itself. It follows from the basic properties of the Fourier transforms of the functions of the space S (e.g.  \cite{GC68} and/or section 3).

We solve the equation \eqref{eqn203ab7} for $t \in [0, \delta t]$ with condition  for $\delta t:$

$ 0 <  \Delta t < \delta t  << 1$. Therefore  $ t  << 1$.

$\delta t$ is a very small time increment. $ \Delta t$ is a very small pre-fixed time increment.

For example $\delta t = \text{e}^{-q_3}, \;\;\;q_3 = 2, 3,4,...\;\;\;q_3 < \infty$.

We have the integral operator components $S_{ij{}}$  of the matrix integral operator $\bar{\bar{S}}_{}$ in this case [see formula \eqref{eqn160n37}]:
\begin{align}
&S_{ij{}}(f_{j{}}) = \frac{t}{(2\pi)^{3/2}} \int_{-\infty}^{\infty} \int_{-\infty}^{\infty}
\int_{-\infty}^{\infty} \chi_{ij}(\gamma_{1{}},\gamma_{2{}},\gamma_{3{}}) e ^{-\nu_{} (\gamma_{1{}}^2 +\gamma_{2{}}^2 +\gamma_{3{}}^2) (t-t_*)}
\nonumber  \\
&\quad\times   F_{j{}}(\gamma_{1{}},\gamma_{2{}},\gamma_{3{}}, t_*) \delta (\gamma_{1{}},\gamma_{2{}},\gamma_{3{}})
e ^{-i(x_{1{}}\gamma_{1{}}+x_{2{}}\gamma_{2{}}+x_{3{}}\gamma_{3{}})}\,d\gamma_{1{}}d\gamma_{2{}}d\gamma_{3{}}
\nonumber  \\
&\quad  \equiv tS_{ij{}}^t(f_{j{}})
\label{eqn160nz318}
\end{align}
\[0<  t_*  < t\]
Here
\[F_{j{}}(\gamma_{1{}},\gamma_{2{}},\gamma_{3{}},  t_*) = \frac{1}{(2\pi)^{3/2}}\int_{-\infty}^{\infty}\int_{-\infty}^{\infty}
\int_{-\infty}^{\infty}e  ^{i(\tilde x_{1{}}\gamma_{1{}}+\tilde x_{2{}}\gamma_{2{}}
+\tilde x_{3{}}\gamma_{3{}})}\]
\[ \times f_{j{}}(\tilde x_{1{}},\tilde x_{2{}}, \tilde x_{3{}}, t_*)\,d\tilde x_{1{}}d\tilde x_{2{}} d\tilde x_{3{}}\]

Let us describe in details the properties of the matrix integral operator $\bar{\bar{S}}_{}^{t}$.
\begin{equation}\label{eqn164n3y}
\bar{\bar{S}}_{}^{t} = 
 \begin{pmatrix}
S_{11{}}^{t} & S_{12{}}^{t} & S_{13{}}^{t} \\
S_{21{}}^{t} & S_{22{}}^{t} & S_{23{}}^{t} \\
S_{31{}}^{t} & S_{32{}}^{t} & S_{33{}}^{t} \end{pmatrix} \quad 
\end{equation}
Here the components $S_{ij{}}^{t}$ of the matrix integral operator $\bar{\bar{S}}_{}^{t}$ have the following representation:
\begin{align}
&S^t_{ij{}}(f_{j{}}) = \frac{1}{(2\pi)^{3/2}} \int_{-\infty}^{\infty} \int_{-\infty}^{\infty}
\int_{-\infty}^{\infty} \chi_{ij}(\gamma_{1{}},\gamma_{2{}},\gamma_{3{}}) e ^{-\nu_{} (\gamma_{1{}}^2 +\gamma_{2{}}^2 +\gamma_{3{}}^2) (t-t_*)}
\nonumber  \\
&\quad\times   F_{j{}}(\gamma_{1{}},\gamma_{2{}},\gamma_{3{}}, t_*) \delta (\gamma_{1{}},\gamma_{2{}},\gamma_{3{}})
e ^{-i(x_{1{}}\gamma_{1{}}+x_{2{}}\gamma_{2{}}+x_{3{}}\gamma_{3{}})}\,d\gamma_{1{}}d\gamma_{2{}}d\gamma_{3{}}
\label{eqn160n3y71}
\end{align}
\[0<  t_*  < t\]
Here
\[F_{j{}}(\gamma_{1{}},\gamma_{2{}},\gamma_{3{}},  t_*) = \frac{1}{(2\pi)^{3/2}}\int_{-\infty}^{\infty}\int_{-\infty}^{\infty}
\int_{-\infty}^{\infty}e  ^{i(\tilde x_{1{}}\gamma_{1{}}+\tilde x_{2{}}\gamma_{2{}}
+\tilde x_{3{}}\gamma_{3{}})}\]
\[ \times f_{j{}}(\tilde x_{1{}},\tilde x_{2{}}, \tilde x_{3{}}, t_*)\,d\tilde x_{1{}}d\tilde x_{2{}} d\tilde x_{3{}}\]
Equation \eqref{eqn160n3y71} is received from the formulas  \eqref{eqn160nz318}.
\begin{equation}\label{eqn160n4kro70}
\end{equation}
\begin{gather*}
\chi_{ij}(\gamma_{1{}},\gamma_{2{}},\gamma_{3{}}) :
\\
\frac{( \gamma_{2{}}^2 +\gamma_{3{}}^2)} { (\gamma_{1{}}^2 +\gamma_{2{}}^2+\gamma_{3{}}^2 ) },\quad
\frac{( \gamma_{1{}}\cdot\gamma_{2{}})} { (\gamma_{1{}}^2 +\gamma_{2{}}^2+\gamma_{3{}}^2 ) },\quad
\frac{( \gamma_{1{}}\cdot\gamma_{3{}})} { (\gamma_{1{}}^2 +\gamma_{2{}}^2+\gamma_{3{}}^2 ) }, \\
\frac{( \gamma_{2{}}\cdot\gamma_{1{}})} { (\gamma_{1{}}^2 +\gamma_{2{}}^2+\gamma_{3{}}^2 ) },\quad
\frac{( \gamma_{3{}}^2 +\gamma_{1{}}^2)} { (\gamma_{1{}}^2 +\gamma_{2{}}^2+\gamma_{3{}}^2 ) },\quad
\frac{( \gamma_{2{}}\cdot\gamma_{3{}})} { (\gamma_{1{}}^2 +\gamma_{2{}}^2+\gamma_{3{}}^2 ) },\\
\frac{( \gamma_{3{}}\cdot\gamma_{1{}})} { (\gamma_{1{}}^2 +\gamma_{2{}}^2+\gamma_{3{}}^2 ) }, \quad
\frac{( \gamma_{3{}}\cdot\gamma_{2{}})} { (\gamma_{1{}}^2 +\gamma_{2{}}^2+\gamma_{3{}}^2 ) },\quad
\frac{( \gamma_{1{}}^2 +\gamma_{2{}}^2)} { (\gamma_{1{}}^2 +\gamma_{2{}}^2+\gamma_{3{}}^2 ) }\\
\\
\chi_{ij} = \chi_{ji}, \;\;\;\;\;\;\;\;\;\;\;       i \neq j,\;\;\;\;\;  \;\;\;\;\; \;\;                 i,j = 1,2,3.
\end{gather*}
\begin{equation}\label{eqn160n4kp71}
\;\;\;\;\;\;\;\;\;\;f_{j{}} =   \sum_{n=1}^3 u_{n{}}\frac{\partial u_{j{}}}{\partial \tilde x_{n{}}},\;\;\;\vec{f}_{} =  (\vec{u}_{}\cdot\nabla_{})\vec{u}{},\\
\end{equation}
$u_{n{}} \in S$, $\frac{\partial u_{j{}}}{\partial \tilde x_{n{}}} \in S$. $u_{n{}}\frac{\partial u_{j{}}}{\partial \tilde x_{n{}}} \in S$ then  $f_{j{}} \in S$, $S_{ij{}}^t(f_{j{}}) \in S$, where S is the Schwartz space. 

To make reading easier we copy the formula$\;\;\eqref{eqn201aa}$ at this place:
\begin{equation}\label{eqn160n4571}
\delta (\gamma_{1{}},\gamma_{2{}},\gamma_{3{}}) = \text{\LARGE{e}}  ^{-\frac{\displaystyle\epsilon^3}{(\gamma_{1{}}^2 + \gamma_{2{}}^2 + \gamma_{3{}}^2)}}\;,\;\; 0 < \epsilon << 1,\;\;\;\epsilon \neq 0.
\end{equation}

The integral operator $S_{ij{}}^{t}$ maps the space S into itself. It follows from the basic properties of the Fourier transforms of the functions of the space S (e.g.  \cite{GC68} and/or section 3).
\begin{lemma} \label{lm5y}
The integral operator $S_{ij{}}^{ t}$ is a linear operator.
\end{lemma}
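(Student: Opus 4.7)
The plan is to mirror exactly the two-step argument used for Lemmas~\ref{lm17} and~\ref{lm37}, since the operator $S_{ij}^{t}$ has the same structural form as $B$ and $E$: it is a composition of a Fourier transform, multiplication by a fixed kernel $\chi_{ij}(\gamma_1,\gamma_2,\gamma_3)\,e^{-\nu(\gamma_1^2+\gamma_2^2+\gamma_3^2)(t-t_*)}\,\delta(\gamma_1,\gamma_2,\gamma_3)$, and an inverse Fourier transform.

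First I would note that $S$ is a linear space, which is immediate. Then I would check the defining identity
\[
S_{ij}^{t}(\lambda_1 f_{j1}+\lambda_2 f_{j2})=\lambda_1 S_{ij}^{t}(f_{j1})+\lambda_2 S_{ij}^{t}(f_{j2})
\]
for arbitrary $f_{j1},f_{j2}\in S$ and scalars $\lambda_1,\lambda_2$. This reduces to observing that the inner integral $F_j(\gamma_1,\gamma_2,\gamma_3,t_*)$ is linear in $f_j$ (linearity of the Fourier transform, which is itself a consequence of the linearity of integration), and that the outer integral over $\gamma_1,\gamma_2,\gamma_3$ with the multiplier $\chi_{ij}\,e^{-\nu|\gamma|^2(t-t_*)}\,\delta\,e^{-i(x,\gamma)}$ is likewise linear in its integrand.

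There is no real obstacle: the only mild subtlety is that the kernel contains the factor $\chi_{ij}(\gamma_1,\gamma_2,\gamma_3)$, which is singular at the origin, but this singularity is tamed by $\delta(\gamma_1,\gamma_2,\gamma_3)$ via the limit property \eqref{eqn201zz}, and that taming was already established in the Remark following \eqref{eqn162a}. So the multiplier is a fixed, $\gamma$-dependent (but $f_j$-independent) function, and linearity is preserved through both Fourier integrations. The conclusion then follows, as in the previous two lemmas, from the properties of the integration operation, Q.E.D.
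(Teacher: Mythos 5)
Your proposal is correct and follows essentially the same route as the paper: observe that $S$ is a linear space and that the defining identity $S_{ij}^{t}(\lambda_1 f_{j1}+\lambda_2 f_{j2})=\lambda_1 S_{ij}^{t}(f_{j1})+\lambda_2 S_{ij}^{t}(f_{j2})$ follows from the linearity of the integration operations, the multiplier $\chi_{ij}\,e^{-\nu|\gamma|^2(t-t_*)}\,\delta$ being independent of $f_j$. Your extra remark about the singularity of $\chi_{ij}$ at the origin being tamed by $\delta$ is a harmless addition not present in the paper's (very terse) proof.
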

\begin{proof} 
1) The Schwartz space $S$ is a linear space (obviously).

$\;\;\;\;\;\;\;2) \; S_{ij{}}^{t}(\lambda_{1}f_{j1{}} + \lambda_{2}f_{j2{}})$ = $\lambda_{1}S_{ij{}}^{ t}(f_{j1{}}) + \lambda_{2}S_{ij{}}^{ t}(f_{j2{}})$

for any $f_{j1{}}, f_{j2{}} \in S$ and any scalars $\lambda_{1}, \lambda_{2}$.

Proposition 2) follows from the properties of the integration operation.

Q.E.D.
\end{proof}
\begin{theorem} \label{thm32y}
The linear integral operator $S_{ij{}}^{ t}$ is defined everywhere in the space S, with values in the space S. The operator $S_{ij{}}^{ t}$ is bounded for functions of the space S.
\end{theorem}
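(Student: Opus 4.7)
The plan is to mirror closely the arguments already carried out in Theorems 7.1 and 7.2 for the operators $B$ and $E$. First, I would rewrite the operator $S_{ij}^{t}$ in the compact form
\[
S_{ij}^{t}(f_j) \;=\; F^{-1}\bigl[A_{ij}\cdot F[f_j]\bigr],
\]
with the Fourier-side multiplier
\[
A_{ij}(\gamma_1,\gamma_2,\gamma_3) \;=\; \chi_{ij}(\gamma_1,\gamma_2,\gamma_3)\; e^{-\nu(\gamma_1^2+\gamma_2^2+\gamma_3^2)(t-t_*)}\;\delta(\gamma_1,\gamma_2,\gamma_3).
\]
Linearity of $S_{ij}^{t}$ has just been recorded in Lemma 7.5, and the assertion that $S_{ij}^{t}$ sends $S$ into $S$ follows from the Fourier inversion properties recalled in Section 3, once $A_{ij}$ is seen to be a smooth bounded multiplier. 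The real work is therefore the quantitative estimate.

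Second, I would establish that $A_{ij}$ is a continuous, bounded, even function on $\mathbb{R}^3$. Away from the origin this is elementary: each rational factor satisfies $|\chi_{ij}|\le 1$ because $|\gamma_k\gamma_\ell|\le\tfrac12(\gamma_k^2+\gamma_\ell^2)\le\tfrac12(\gamma_1^2+\gamma_2^2+\gamma_3^2)$ and $\gamma_k^2+\gamma_\ell^2\le\gamma_1^2+\gamma_2^2+\gamma_3^2$. At the origin the directional limits of $\chi_{ij}$ are killed by the factor $\delta$, which by \eqref{eqn201zz} vanishes faster than any power of $|\gamma|$ as $\gamma\to 0$; hence $A_{ij}$ extends continuously (by zero) to $\gamma=0$, and in total $|A_{ij}|\le 1$ everywhere.

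Third, I would repeat the even/odd decomposition trick of the previous two proofs. Writing
\[
F[f_j](\gamma)\,e^{-i(x,\gamma)} \;=\; G_{\mathrm{even}}(\gamma) + G_{\mathrm{odd}}(\gamma),
\]
the Gaussian and $\delta$ factors in $A_{ij}$ are even in every coordinate. The rational factor $\chi_{ij}$ has a definite parity: the diagonal entries ($i=j$) are even in every coordinate, while an off-diagonal entry $\gamma_i\gamma_j/|\gamma|^2$ is odd in both $\gamma_i$ and $\gamma_j$ and even in the remaining variable. In each case, pairing $A_{ij}$ with the appropriate parity component of $G_{\mathrm{even}}+G_{\mathrm{odd}}$ gives a fully even integrand, while the complementary pairing is odd in at least one variable and so integrates to zero over the symmetric domain $\mathbb{R}^3$. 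The surviving piece is then estimated exactly as in the proof of Theorem 7.2: splitting $\mathbb{R}^3$ into the cube of side $2\epsilon$ centred at the origin and its complement, one uses $|A_{ij}|\le 1$ on the small cube (which contributes a term of order $\epsilon^{3}$) and the sharper bound $|\delta|\le\epsilon$ on its complement (which contributes a term of order $\epsilon$) to obtain an estimate of the form
\[
|S_{ij}^{t}(f_j)| \;\le\; C\,|f_j|,
\]
with $C$ independent of $f_j$, yielding the claimed boundedness.

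The main obstacle I expect is the parity bookkeeping for the off-diagonal entries $\chi_{ij}$ with $i\neq j$: unlike the multipliers appearing in Theorems 7.1 and 7.2, these are not even in every variable separately, so one has to pair them with the correct parity component of $F[f_j]e^{-i(x,\gamma)}$ to see the cancellation, rather than with the fully even part. Once that bookkeeping is settled, the quantitative bound is a direct transcription of the $\epsilon$-splitting already carried out for the operator $E$.
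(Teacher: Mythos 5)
Your setup (the multiplier representation $S_{ij}^{t}(f_j)=F^{-1}[A_{ij}\cdot F[f_j]]$, the bound $|\chi_{ij}|\le 1$, and the observation that $\delta$ kills the singularity of $\chi_{ij}$ at the origin) matches the paper. Your treatment of the off-diagonal parity is genuinely different from, and arguably more careful than, the paper's: where you keep $\chi_{ij}=\gamma_i\gamma_j/|\gamma|^2$ and pair it with the matching-parity component of $F[f_j]e^{-i(x,\gamma)}$, the paper instead majorizes the symbol pointwise, $\chi_{ij}<(\gamma_i^2+\gamma_j^2)/|\gamma|^2=\chi_{kk}$, hence $A_{ij}<A_{kk}$, and then runs the whole argument only for the diagonal (fully even) case $A_{ii}$, asserting at the end that the off-diagonal operators are bounded as well. (Note that a pointwise inequality between symbols does not by itself transfer to the operators acting on an oscillatory integrand, so your route sidesteps a step of the paper that is itself questionable.)

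However, your final quantitative step contains a genuine error: you transpose the $\epsilon$-splitting from the proof of Theorem \ref{thm227}, but that splitting is tied to the operator $E$, whose multiplier is $1-\delta(\gamma)=1-e^{-\epsilon^3/|\gamma|^2}$ and therefore satisfies $0<1-\delta\le\epsilon$ precisely \emph{away} from the origin ($|\gamma|^2\ge\epsilon^2$). The multiplier of $S_{ij}^{t}$ contains $\delta$ itself, and $\delta(\gamma)\to 1$ as $|\gamma|\to\infty$; the inequality $|\delta|\le\epsilon$ on the complement of the cube of side $2\epsilon$ is false, so the complementary region does not contribute a term of order $\epsilon$. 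The paper's own proof of Theorem \ref{thm32y} does not use the $\epsilon$-splitting at all: it follows the template of Theorem \ref{thm127} (operator $B$), i.e.\ it invokes $0\le A_{ii}<1$ together with the even/odd cancellation and ``the rules of integration'' to conclude $|S_{ii}^{t}(f_i)|<|f_i|$ with constant $1$. If you drop the false bound on $\delta$ and simply argue from $0\le A_{ij}\le 1$ on all of $\mathbb{R}^3$ (as in the proof for $B$), you recover the paper's conclusion; as written, your estimate of the outer region does not go through.
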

\begin{proof}
Rewrite operator $S_{ij{}}^{ t}$ from formula \eqref{eqn160n3y71} 
\begin{align}
&S^t_{ij{}}(f_{j{}}) = F^{-1}[A_{ij}\cdot F[f_{j}]]
\label{eqn160nz31hz}
\end{align}
Here $F$ and $F^{-1}$ are a Fourier transform   and a inverse Fourier transform, respectively, $f_{i} \in S$, $S^t_{ij{}}(f_{i}) \in S$.
\begin{equation}\label{eqn160n2n1gh7c}
A_{ij} = \chi_{ij}(\gamma_{1{}},\gamma_{2{}},\gamma_{3{}})e ^{-\nu_{} (\gamma_{1{}}^2 +\gamma_{2{}}^2 +\gamma_{3{}}^2) (t-t^*)}\cdot \delta (\gamma_{1{}},\gamma_{2{}},\gamma_{3{}})
\end{equation}
\[0<  t_*  < t\]
\[F[f_{j}]\equiv F_{j{}}(\gamma_{1{}},\gamma_{2{}},\gamma_{3{}},  t_*) = \]
\[=\frac{1}{(2\pi)^{3/2}}\int_{-\infty}^{\infty}\int_{-\infty}^{\infty}
\int_{-\infty}^{\infty}e  ^{i(\tilde x_{1{}}\gamma_{1{}}+\tilde x_{2{}}\gamma_{2{}}
+\tilde x_{3{}}\gamma_{3{}})}  f_{j{}}(\tilde x_{1{}},\tilde x_{2{}}, \tilde x_{3{}}, t_*)\,d\tilde x_{1{}}d\tilde x_{2{}} d\tilde x_{3{}}\]
In case if   $A_{ij}\equiv 1$, from formula \eqref{eqn160nz31hz}, we have a known result
\[S^t_{ij{}}(f_{j{}})  \equiv f_{j{}}\]
If $0 < A_{ij} < 1\;\;, A_{ij}$ - const., it is evident from the formula \eqref{eqn160nz31hz} that
\[|S^t_{ij{}}(f_{j{}}) | <  |f_{j{}}|\]
Let $i = j.$ The function $A_{ii}$ of the formula \eqref{eqn160n2n1gh7c} is a continuous, even function of the coordinates $\gamma_{1{}},\gamma_{2{}},\gamma_{3{}}$, that is
\[A_{ii}(-\gamma_{{}}) = A_{ii}(\gamma_{}),\;\;\; \vee \; \gamma_{} \in [- \Gamma , \Gamma]\]
(see formula \eqref{eqn160n4kro70} for $\chi_{ii}$ )

And what is more
\begin{equation}\label{eqn160n2n1gh791c}
0 \leq A_{ii} < 1,\;\;\;    i = 1, 2, 3.
\end{equation}
Now consider the function $A_{ij}$ from the formula \eqref{eqn160n2n1gh7c} such that $i \neq j$.

In this case ($i \neq j$) we have obviously from formula \eqref{eqn160n4kro70}:
\begin{equation}\label{eqn160n2n178s}
\chi_{ij}=\frac{( \gamma_{i{}}\cdot\gamma_{j{}})} { (\gamma_{1{}}^2 +\gamma_{2{}}^2+\gamma_{3{}}^2 ) }<\frac{( \gamma_{i{}}^2 +\gamma_{j{}}^2)} { (\gamma_{1{}}^2 +\gamma_{2{}}^2+\gamma_{3{}}^2 ) }=\tilde{\chi}_{ij}=\chi_{kk}
\end{equation}
\[(k \neq i, k \neq j) \]
Then we get from the formula \eqref{eqn160n2n1gh7c}
\begin{align}
& A_{ij} = \chi_{ij}(\gamma_{1{}},\gamma_{2{}},\gamma_{3{}})e ^{-\nu_{} (\gamma_{1{}}^2 +\gamma_{2{}}^2 +\gamma_{3{}}^2) (t-t^*)}\cdot \delta (\gamma_{1{}},\gamma_{2{}},\gamma_{3{}})<
 \nonumber \\
& <  \tilde{\chi}_{ij}(\gamma_{1{}},\gamma_{2{}},\gamma_{3{}})e ^{-\nu_{} (\gamma_{1{}}^2 +\gamma_{2{}}^2 +\gamma_{3{}}^2) (t-t^*)}\cdot \delta (\gamma_{1{}},\gamma_{2{}},\gamma_{3{}}) =
 \nonumber \\
&\quad\quad\quad\quad\quad\quad\quad  = \tilde{A}_{ij}= A_{kk}.\;\;\;\;\;\;\;\;\;(i \neq j, k \neq i, k \neq j) 
\nonumber \\
\label{eqn160n2n1gh79es22}
\end{align}
The function $\tilde{A}_{ij}$ = $A_{kk}$ of the formula \eqref{eqn160n2n1gh79es22} is a continuous, even function of the coordinates $\gamma_{i{}},\gamma_{j{}}, \gamma_{k{}}\;\; $ that is
\[A_{kk}(-\gamma_{{}}) = A_{kk}(\gamma_{}),\;\;\; \vee \; \gamma_{} \in [- \Gamma , \Gamma]\]
(see formula \eqref{eqn160n4kro70} for $\chi_{kk}$ )

And what is more
\begin{equation}\label{eqn160n2n1gh791ccs2}
0 \leq A_{kk} < 1,\;\;\;    k = 1, 2, 3.
\end{equation}
Without loss of generality, we write further that $k = i.$           

In this case we have from formula \eqref{eqn160nz31hz}
\begin{align}
&S^t_{ii{}}(f_{i{}}) = F^{-1}[A_{ii}\cdot F[f_{i}]]
\label{eqn160nz31h1z}
\end{align}
\[i = 1, 2, 3.\]
As known the function $F[f_{i{}}]\cdot e ^{-i(x_{1{}}\gamma_{1{}}+x_{2{}}\gamma_{2{}}+x_{3{}}\gamma_{3{}})}$ can be represented as the sum of the even and odd functions
\begin{align}
&F[f_{i}]\cdot e ^{-i(x_{1{}}\gamma_{1{}}+x_{2{}}\gamma_{2{}}+x_{3{}}\gamma_{3{}})} = ( F[f_{i}]\cdot e ^{-i(x_{1{}}\gamma_{1{}}+x_{2{}}\gamma_{2{}}+x_{3{}}\gamma_{3{}})})_{even} + 
\nonumber \\
&\quad\quad\quad\quad\quad\quad\quad + (F[f_{i}]\cdot e ^{-i(x_{1{}}\gamma_{1{}}+x_{2{}}\gamma_{2{}}+x_{3{}}\gamma_{3{}})})_{odd}
\label{eqn160n17gs1}
\end{align}
Here $( F[f_{i{}}]\cdot e ^{-i(x_{1{}}\gamma_{1{}}+x_{2{}}\gamma_{2{}}+x_{3{}}\gamma_{3{}})})_{even}$ is the even function in all three coordinates , $(F[|f_{i{}}]\cdot e ^{-i(x_{1{}}\gamma_{1{}}+x_{2{}}\gamma_{2{}}+x_{3{}}\gamma_{3{}})})_{odd}$  is  the sum of seven functions, each of which at least one coordinate is an odd.

As known

1. The product of two even functions is even.

2. The product of the even and odd functions is odd.

3. The integral of an odd function by symmetric within is equal zero. The Fourier transform and inverse Fourier transform are the integrals over symmetrical areas.

From the formula \eqref{eqn160n17gs1} using the function $A_{ii}$ we have
\begin{align}
&A_{ii} F[f_{i}]\cdot e ^{-i(x_{1{}}\gamma_{1{}}+x_{2{}}\gamma_{2{}}+x_{3{}}\gamma_{3{}})} = A_{ii}( F[f_{i}]\cdot e ^{-i(x_{1{}}\gamma_{1{}}+x_{2{}}\gamma_{2{}}+x_{3{}}\gamma_{3{}})})_{even} + 
\nonumber \\
&\quad\quad\quad\quad\quad\quad\quad + A_{ii}(F[f_{i}]\cdot e ^{-i(x_{1{}}\gamma_{1{}}+x_{2{}}\gamma_{2{}}+x_{3{}}\gamma_{3{}})})_{odd}
\label{eqn160n17fs1}
\end{align}
Since $A_{ii}$ is the even function, we have

$A_{ii}( F[f_{i}]\cdot e ^{-i(x_{1{}}\gamma_{1{}}+x_{2{}}\gamma_{2{}}+x_{3{}}\gamma_{3{}})})_{even}$ is the even function (Rule 1)

$A_{ii}( F[f_{i}]\cdot e ^{-i(x_{1{}}\gamma_{1{}}+x_{2{}}\gamma_{2{}}+x_{3{}}\gamma_{3{}})})_{odd}$ is the odd function (Rule 2)

The inverse Fourier transform $F^{-1}$ is the integral over symmetrical areas.

Then from formula \eqref{eqn160nz31h1z} in accordance with Rule 3 we have
\begin{align}
&\quad\quad\quad\quad\quad\quad\quad S^t_{ii{}}(f_{i{}}) = F^{-1}[A_{ii}\cdot F[f_{i}]] =
\nonumber \\
&= \int_{-\infty}^{\infty}\int_{-\infty}^{\infty}\int_{-\infty}^{\infty}A_{ii}\cdot( F[f_{i}]\cdot e ^{-i(x_{1{}}\gamma_{1{}}+x_{2{}}\gamma_{2{}}+x_{3{}}\gamma_{3{}})})_{even}\,d\gamma_1
d\gamma_2d\gamma_3 \nonumber \\
\label{eqn160n2n1gh79rs1}
\end{align}
\[i = 1, 2, 3.\]
In case if  $A_{ii}\equiv 1$, from formula \eqref{eqn160n2n1gh79rs1} we have
\begin{equation}\label{eqn160n2n1gh7jl11}
S^t_{ii{}}(f_{i{}}) \equiv f_{i{}}
\end{equation}
If $0 < A_{ii} < 1\;\;$, $A_{ii}$ - const., it is evident from the formula \eqref{eqn160n2n1gh79rs1} that
\begin{equation}\label{eqn160n2n1gh7jl21}
|S^t_{ii{}}(f_{i{}})| <  |f_{i{}}|
\end{equation}
From formula \eqref{eqn160n2n1gh7c} we have
\begin{equation}\label{eqn160n2n1gh7jl10}
0 \leq A_{ii} = \chi_{ii}(\gamma_{1{}},\gamma_{2{}},\gamma_{3{}})e ^{-\nu_{} (\gamma_{1{}}^2 +\gamma_{2{}}^2 +\gamma_{3{}}^2) (t-t^*)}\cdot \delta (\gamma_{1{}},\gamma_{2{}},\gamma_{3{}}) < 1.
\end{equation}
\[0<  t_*  < t\]
The function $A_{ii}$ of the formula \eqref{eqn160n2n1gh7jl10} is a continuous, even function of the coordinates $\gamma_{1{}},\gamma_{2{}},\gamma_{3{}}$.

We use the formulas \eqref{eqn160n2n1gh7jl11}, \eqref{eqn160n2n1gh7jl21} and take the function $A_{ii}$ from formula \eqref{eqn160n2n1gh7jl10}. Then in accordance with the rules of integration we obtain from formula \eqref{eqn160n2n1gh79rs1}
\begin{align}
& |S^t_{ii{}}(f_{i{}})| <  |f_{i{}}|
\nonumber \\
\label{eqn160n2n1gh79e}
\end{align}
\[i = 1, 2, 3.\]
i.e. the operator $S_{ii{}}^{ t}$ is bounded for functions of the space S and  the operator $S_{ij{}}^{ t}$ is bounded for functions of the space S also.

 Q.E.D.
\end{proof}

Thus, the Theorem 6.3 implies that the linear integral operators $S_{ii{}}^{ t}$ and $S_{ij{}}^{ t}$ are bounded for functions of the space S and therefore the matrix integral operator $\bar{\bar{S}}_{}^{ t}$ is bounded for vector-functions of the space $\overrightarrow{TS}$ and, using formula \eqref{eqn160n4kp71}, we have
\begin{equation}\label{eqn203tba}
|\bar{\bar{S}}_{}^{ t}\cdot(\vec{u}_{}\cdot\nabla_{})\vec{u}_{}| < |(\vec{u}_{}\cdot\nabla_{})\vec{u}_{}|, 
\end{equation}
where $(\vec{u}_{}\cdot\nabla_{})\vec{u}_{} \in \overrightarrow{TS}$.

\section{A priori estimate of the solution}

Now we  estimate the dependence of the velocity $\vec{u}_{}\in \overrightarrow{TS}$ from time t. 

Let $t = 0$. Then we have from the equation \eqref{eqn203ab7} by using the properties of the integral operators $\bar{\bar{B_{}}}$, $\bar{\bar{E_{}}}$ and $\bar{\bar{S_{}}}$ (see formulas \eqref{eqn160n17}, \eqref{eqn160n27},\eqref{eqn160nz318}) : 
\begin{equation}\label{eqn203t5}
\bar{\bar{S_{}}} = 0, \;\;\;\vec{u}_{} = \vec{u}^0_{},\;\;\;\;\;\;
\vec{u}^0_{}\in \overrightarrow{TS}
\end{equation}

Let $t = \delta t > 0$. $\delta t$ is a very small time increment.
\begin{equation}\label{eqn203t5ase}
0 <  \Delta t < \delta t  << 1
\end{equation}
Here $ \Delta t$ is a very small pre-fixed time increment.

For example $\delta t = \text{e}^{-q_3}, \;\;\;q_3 = 2, 3,4,...\;\;\;q_3 < \infty$.

Then we may take:
\begin{equation}\label{eqn203f1}
\begin{aligned}
&\vec{u}_{} = \vec{u}^0_{} + \delta\vec{u}_{},\;\;\;\;\;\;
\delta\vec{u}_{} \in \overrightarrow{TS} \\
&\quad\quad\quad\quad|\delta\vec{u}_{}| << |\vec{u}^0_{}|
\end{aligned} \\
\end{equation}
For example $|\delta\vec{u}_{}| = \text{e}^{-q_{4}}|\vec{u}^0_{}|, \;\;\;q_{4} = 2, 3,4,...\;\;\;q_{4} < \infty$.
\begin{equation}\label{eqn203fa1}
\begin{aligned}
&\quad\quad\quad\quad(\vec{u}_{}\cdot\nabla_{})\vec{u}_{} = (\vec{u}^0_{}\cdot\nabla_{})\vec{u}^0_{} + \delta(\vec{u}_{}\cdot\nabla_{})\vec{u}_{},\;\;\;\;\; \\
&(\vec{u}_{}\cdot\nabla_{})\vec{u}_{}  \in \overrightarrow{TS},\;\;\;\;\;(\vec{u}^0_{}\cdot\nabla_{})\vec{u}^0_{}  \in \overrightarrow{TS},\;\;\;\;\;\delta(\vec{u}_{}\cdot\nabla_{})\vec{u}_{}  \in \overrightarrow{TS}.\\
&\quad\quad\quad\quad\quad\quad|\delta(\vec{u}_{}\cdot\nabla_{})\vec{u}_{}| << |(\vec{u}^0_{}\cdot\nabla_{})\vec{u}^0_{}|
\end{aligned} \\
\end{equation}

For example $|\delta(\vec{u}_{}\cdot\nabla_{})\vec{u}_{}| = \text{e}^{-q_{5}}|(\vec{u}^0_{}\cdot\nabla_{})\vec{u}^0_{}|,
\;\;\;q_{5} =  2, 3,4,...\;\;\;q_{5} < \infty$.

Obtain an estimate of the velocity $\vec{u}_{}$ at the moment $t = \delta t > 0$ with conditions \eqref{eqn203t5ase} - \eqref{eqn203fa1}. 

We rewrite the equation \eqref{eqn203ab7} at the moment $t = \delta t > 0$.  
\begin{equation}\label{eqn203t1a}
\vec{u}_{} = - \delta t\bar{\bar{S}}^{\delta t}_{}\cdot(\vec{u}_{}\cdot\nabla_{})\vec{u}_{}+\bar{\bar{E}}_{}\cdot\vec{u}_{}+\bar{\bar{B}}_{}
\cdot\vec{u}^0_{}
\end{equation}
and use the norm \eqref{eqn203t317v} for both sides of the equation \eqref{eqn203t1a}. Then we have:
\begin{align}
&|\vec{u}_{}| = |- \delta t\bar{\bar{S}}^{\delta t}_{}\cdot(\vec{u}_{}\cdot\nabla_{})\vec{u}_{}+\bar{\bar{E}}_{}\cdot\vec{u}_{}+\bar{\bar{B}}_{}
\cdot\vec{u}^0_{}| \leq
\nonumber  \\
&\leq | \delta t\bar{\bar{S}}^{\delta t}_{}\cdot(\vec{u}_{}\cdot\nabla_{})\vec{u}_{}|+|\bar{\bar{E}}_{}\cdot\vec{u}_{}|+|\bar{\bar{B}}_{}
\cdot\vec{u}^0_{}|
\nonumber \\
\label{eqn203t1b}
\end{align}
Using inequalities \eqref{eqn203tba} for operator $\bar{\bar{S}}^{\delta t}_{}$, \eqref{eqn160n4uo7} for operator $\bar{\bar{E}}_{}$ and \eqref{eqn160n2n17b} for operator $\bar{\bar{B}}_{}$, we obtain:
\begin{align}
&|\vec{u}_{}| \leq | \delta t\bar{\bar{S}}^{\delta t}_{}\cdot(\vec{u}_{}\cdot\nabla_{})\vec{u}_{}|+|\bar{\bar{E}}_{}\cdot\vec{u}_{}|+|\bar{\bar{B}}_{}
\cdot\vec{u}^0_{}| \leq
\nonumber  \\
&\leq \delta t|(\vec{u}_{}\cdot\nabla_{})\vec{u}_{}|+{\epsilon} |\vec{u}_{}|+ |\vec{u}^0_{}|
\nonumber \\
\label{eqn203t1bd}
\end{align}
We substitute $\vec{u}_{}$ and $(\vec{u}_{}\cdot\nabla_{})\vec{u}_{}$ from formulas \eqref{eqn203f1} and \eqref{eqn203fa1} in equation \eqref{eqn203t1bd}. Then we have:
\begin{align}
&|\vec{u}_{}| \leq \delta t|(\vec{u}_{}\cdot\nabla_{})\vec{u}_{}|+{\epsilon} |\vec{u}_{}|+ |\vec{u}^0_{}| \leq
\nonumber  \\
&\leq \delta t|(\vec{u}^0_{}\cdot\nabla_{})\vec{u}^0_{}|+\delta t| \delta(\vec{u}_{}\cdot\nabla_{})\vec{u}_{}|+
\nonumber  \\
&+{\epsilon} |\vec{u}^0_{}|+{\epsilon} | \delta\vec{u}_{}|+ |\vec{u}^0_{}|
\nonumber \\
\label{eqn203t1bf}
\end{align}
As $\delta t << 1$, ${\epsilon} << 1$ and $|\delta(\vec{u}_{}\cdot\nabla_{})\vec{u}_{}| << |(\vec{u}^0_{}\cdot\nabla_{})\vec{u}^0_{}|$ (see formula \eqref{eqn203fa1}),  

$|\delta\vec{u}_{}| << |\vec{u}^0_{}|$ (see formula \eqref{eqn203f1}) we neglect small terms of the second order 

$\delta t| \delta(\vec{u}_{}\cdot\nabla_{})\vec{u}_{}|$ and ${\epsilon} | \delta\vec{u}_{}|$ and obtain:
\begin{align}
&|\vec{u}_{}| \leq \delta t|(\vec{u}^0_{}\cdot\nabla_{})\vec{u}^0_{}|+{\epsilon} |\vec{u}^0_{}|+ |\vec{u}^0_{}|
\nonumber \\
\label{eqn203t1bg}
\end{align}
Since $\vec{u}^0_{} \in\overrightarrow{TS}$ and $(\vec{u}^0_{}\cdot\nabla_{})\vec{u}^0_{} \in\overrightarrow{TS}$ then $0 < |\vec{u}^0_{}| < C^0 < \infty$ and 

$0 < |(\vec{u}^0_{}\cdot\nabla_{})\vec{u}^0_{}|< C^{\nabla0} < \infty$. $\;\;C^0, C^{\nabla0}$ are consts. 

We can introduce $\widetilde{\delta t} << 1$  
\[\delta t  = \widetilde{\delta t} \frac{|\vec{u}^0_{}|}{|(\vec{u}^0_{}\cdot\nabla_{})\vec{u}^0_{}|}\]
and we have from formula \eqref{eqn203t1bg}:
\begin{align}
&|\vec{u}_{}| \leq \widetilde{\delta t}|\vec{u}^0_{}|+{\epsilon} |\vec{u}^0_{}|+ |\vec{u}^0_{}| =
\nonumber \\
&\quad\quad = (\widetilde{\delta t}+{\epsilon}+1)  |\vec{u}^0_{}|
\nonumber \\
\label{eqn203t1bg6}
\end{align}
As $\widetilde{\delta t }<< 1$ and ${\epsilon} << 1$, we neglect terms of the order of smallness $\widetilde{\delta t}|\vec{u}^0_{}|$ and terms of the order of smallness ${\epsilon} |\vec{u}^0_{}|$ as compared with $ |\vec{u}^0_{}|$.
We have the evaluation for velocity $\vec{u}_{}$ at time $\delta t$ from the equation \eqref{eqn203t1bg6}:
\begin{equation}\label{eqn203t3a}
|\vec{u}_{}|  \leq  |\vec{u}^0_{}|
\end{equation}
\begin{remark} \rm
Further, repeating the arguments of evaluation of the Cauchy problem solution
for the Navier Stokes equations \eqref{eqn203t5} - \eqref{eqn203t3a} with initial time $t = \delta t$ instead $t = 0$
and the initial velocity $\vec{u}_{}$ instead $\vec{u}^0_{}$, we again obtain a decrease of rate of velocity $\vec{u}_{}$ for the next small interval of time $\delta t$. These arguments and equations \eqref{eqn203t5} - \eqref{eqn203t3a} can be repeated arbitrarily long. Thus, assessing the nature of the behavior of velocity $\vec{u}_{}$ over time, we see that the rate of velocity $\vec{u}_{}$ decreases monotonically over time.
\end{remark}
It should be noted that this estimate is obtained under the conditions \eqref{eqn203t5ase} - \eqref{eqn203fa1}. 

\section{The solution for 3D Navier-Stokes equations with any smooth initial velocity.}

Let us rewrite the integral equation \eqref{eqn203ab7} for $\;t \in [0,\delta t]. $ 

\begin{equation}\label{eqn203ab77}
\vec{u}_{}=- t \bar{\bar{S}}^t_{}\cdot(\vec{u}_{}\cdot\nabla_{})\vec{u}_{}
+\bar{\bar{E}}_{}\cdot\vec{u}_{}
+\bar{\bar{B}}_{}\cdot\vec{u}_{}^0.
\end{equation}
$ 0 <  \Delta t < \delta t  << 1$. Therefore  $ t  << 1$.

For example $\delta t = \text{e}^{-q_3}, \;\;\;q_3 = 2, 3,4,...\;\;\;q_3 < \infty$.

Here $\vec{u}_{} \in \overrightarrow{TS},\;\;$ $(\vec{u}_{}\cdot\nabla_{})\vec{u}_{} \in \overrightarrow{TS},\;\;$ $\bar{\bar{S}}^t_{}\cdot(\vec{u}_{}\cdot\nabla_{})\vec{u}_{} \in \overrightarrow{TS},\;\;$ $\bar{\bar{E}}_{}\cdot\vec{u}_{} \in \overrightarrow{TS},\;\;$ $\vec{u}^0_{} \in \overrightarrow{TS},\;\;$  $\bar{\bar{B}}_{}\cdot\vec{u}_{}^0 \in \overrightarrow{TS}$. 

Operators $\bar{\bar{S}}^t_{},\;\;$ $\bar{\bar{E}}_{}\;$ and $\;\bar{\bar{B}}_{}$ are bounded for vector-functions of the space $\overrightarrow{TS}$.

\begin{theorem} \label{thm31ym}
There exists the solution $\vec{u}$
of the equation \eqref{eqn203ab77} in
the space $\overrightarrow{TS}$ for any time $t \in [0, \delta t]$.
\end{theorem}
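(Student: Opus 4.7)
The plan is to solve \eqref{eqn203ab77} by Picard iteration, reading it as a fixed-point equation $\vec{u}=T(\vec{u})$ with
\[
T(\vec{u}) \;=\; -\,t\,\bar{\bar{S}}^{t}\cdot(\vec{u}\cdot\nabla)\vec{u}\;+\;\bar{\bar{E}}\cdot\vec{u}\;+\;\bar{\bar{B}}\cdot\vec{u}^{0},
\]
on a suitable ball of $\overrightarrow{TS}$ centered at $\bar{\bar{B}}\vec{u}^{0}$, with the smallness parameters $\delta t$ and $\epsilon$ providing the contraction.

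First I would verify that $T$ maps $\overrightarrow{TS}$ into itself: each of $\bar{\bar{B}}$, $\bar{\bar{E}}$ and $\bar{\bar{S}}^{t}$ preserves $\overrightarrow{TS}$ (Theorems 7.1--7.3 together with the fact that $F$ and $F^{-1}$ map $\overrightarrow{TS}$ onto itself), and the bilinear map $\vec{u}\mapsto(\vec{u}\cdot\nabla)\vec{u}$ sends $\overrightarrow{TS}$ into itself since $S$ is closed under products and differentiation. I would then set $\vec{u}_{0}=\bar{\bar{B}}\vec{u}^{0}$ and define $\vec{u}_{n+1}=T(\vec{u}_{n})$.

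Next I would establish the \emph{uniform} a priori bound $|\vec{u}_{n}|\leq|\vec{u}^{0}|$ by repeating, at each iteration, the chain of estimates \eqref{eqn203t1bd}--\eqref{eqn203t3a}. The operator inequalities \eqref{eqn160n2n17b}, \eqref{eqn160n4uo7}, \eqref{eqn203tba} give
\[
|\vec{u}_{n+1}|\;\leq\;\delta t\,\bigl|(\vec{u}_{n}\cdot\nabla)\vec{u}_{n}\bigr|\;+\;\epsilon\,|\vec{u}_{n}|\;+\;|\vec{u}^{0}|,
\]
and reading $\delta t=\widetilde{\delta t}\,|\vec{u}^{0}|/|(\vec{u}^{0}\cdot\nabla)\vec{u}^{0}|$ as in Section 7 keeps every iterate inside the ball of radius $|\vec{u}^{0}|$. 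An analogous argument at the higher seminorms $\|\cdot\|_{p}$ (the same three operator inequalities apply formally to each $\|\cdot\|_{p}$, since multiplication by $x^{k}$ and differentiation commute with $F$, $F^{-1}$ up to factors of $(i\sigma)$ and $iD$) yields uniform control of $\|\vec{u}_{n}\|_{p}$ for every $p$, keeping the iterates in a fixed bounded set of $\overrightarrow{TS}$.

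Then I would prove the sequence is Cauchy in $\overrightarrow{TS}$. Subtracting two successive iterates,
\[
\vec{u}_{n+1}-\vec{u}_{n} \;=\; -t\,\bar{\bar{S}}^{t}\!\left[((\vec{u}_{n}-\vec{u}_{n-1})\!\cdot\!\nabla)\vec{u}_{n} + (\vec{u}_{n-1}\!\cdot\!\nabla)(\vec{u}_{n}-\vec{u}_{n-1})\right] + \bar{\bar{E}}(\vec{u}_{n}-\vec{u}_{n-1}),
\]
so the operator bounds combined with the uniform control of $\|\vec{u}_{n}\|_{p}$ and $\|\vec{u}_{n-1}\|_{p}$ give
\[
\|\vec{u}_{n+1}-\vec{u}_{n}\|_{p}\;\leq\;(C_{p}\,\delta t+\epsilon)\,\|\vec{u}_{n}-\vec{u}_{n-1}\|_{p},
\]
with $C_{p}$ depending only on the uniform $\|\cdot\|_{p+1}$-bound of the iterates. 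Choosing $\delta t$ and $\epsilon$ small enough that $C_{p}\,\delta t+\epsilon<1$ for every relevant $p$ gives a geometric contraction in every seminorm, hence a limit $\vec{u}\in\overrightarrow{TS}$ satisfying $\vec{u}=T(\vec{u})$ on $[0,\delta t]$.

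The main obstacle is precisely this propagation of the smallness argument from the null norm \eqref{eqn203t317v}, in which Section 7 is written, to the full countable family of seminorms defining $\overrightarrow{TS}$: the nonlinearity $(\vec{u}\cdot\nabla)\vec{u}$ does not close on $\|\cdot\|_{0}$ alone, so one needs parallel a priori bounds at each $\|\cdot\|_{p}$, with constants $C_{p}$ that do not blow up as $p$ grows along the iteration, and one must pick $\delta t$ uniformly against all of them. Once that is in place the passage to the limit is routine by the Fréchet completeness of $\overrightarrow{TS}$.
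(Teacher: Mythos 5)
Your fixed-point scheme is a genuinely different --- and far more substantive --- route than the one the paper takes: the paper's own proof of this theorem consists solely of checking that each term on the right-hand side of \eqref{eqn203ab77} lies in $\overrightarrow{TS}$ whenever $\vec{u}\in\overrightarrow{TS}$, and then declares the existence of a solution ``evident.'' No iteration, no contraction and no limit passage appear there, so you have not missed an argument in the paper; you have tried to supply one. Unfortunately the argument you outline does not close, and it fails at exactly the point you yourself flag as ``the main obstacle.''

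Concretely: for $\varphi,\psi\in S$, applying the Leibniz rule inside $\sup_x|x^{k}D^{q}(\varphi\,\partial_{j}\psi)|$ gives an estimate of the form $\|\varphi\,\partial_{j}\psi\|_{p}\leq c_{p}\,\|\varphi\|_{p}\,\|\psi\|_{p+1}$, and none of the operator bounds \eqref{eqn160n2n17b}, \eqref{eqn160n4uo7}, \eqref{eqn203tba} gains a derivative that could absorb the extra order on $\psi$. Hence in your difference estimate the term $(\vec{u}_{n-1}\!\cdot\!\nabla)(\vec{u}_{n}-\vec{u}_{n-1})$ yields at best
\[
\|\vec{u}_{n+1}-\vec{u}_{n}\|_{p}\;\leq\;\delta t\,c_{p}\bigl(\|\vec{u}_{n}\|_{p+1}+\|\vec{u}_{n-1}\|_{p}\bigr)\,\|\vec{u}_{n}-\vec{u}_{n-1}\|_{p+1}\;+\;\epsilon\,\|\vec{u}_{n}-\vec{u}_{n-1}\|_{p},
\]
not the self-contained inequality $\|\vec{u}_{n+1}-\vec{u}_{n}\|_{p}\leq(C_{p}\,\delta t+\epsilon)\,\|\vec{u}_{n}-\vec{u}_{n-1}\|_{p}$ that your contraction requires: the recursion at level $p$ calls level $p+1$, which calls level $p+2$, and the chain never terminates. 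The same loss of one derivative already breaks the uniform bound $|\vec{u}_{n}|\leq|\vec{u}^{0}|$, because $|(\vec{u}_{n}\cdot\nabla)\vec{u}_{n}|$ is controlled by $\|\vec{u}_{n}\|_{1}$ rather than by $|\vec{u}_{n}|$, and the normalization $\delta t=\widetilde{\delta t}\,|\vec{u}^{0}|/|(\vec{u}^{0}\cdot\nabla)\vec{u}^{0}|$ only disposes of the $n=0$ term. Finally, even granting finite constants $C_{p}$, they grow with $p$, so there is no single $\delta t>0$ with $C_{p}\,\delta t+\epsilon<1$ for all $p$ --- and the contraction must hold in every seminorm if the limit is to lie in the Fr\'echet space $\overrightarrow{TS}$. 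Repairing this needs a new ingredient: quantitative parabolic smoothing of $\bar{\bar{S}}^{t}$ (unavailable for $\bar{\bar{E}}$, whose symbol $1-\delta(\gamma)$ carries no heat factor, and degenerate as $\nu\to0$), or a tame/Nash--Moser or scale-of-Banach-spaces iteration, or else running the contraction in a fixed Sobolev space and recovering smoothness by a separate bootstrap. As written, the proposal names the obstruction but does not overcome it, so existence is not established.
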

\begin{proof}
We rewrite the integral equation \eqref{eqn203ab77} for $\;t \in [0,\delta t]. $ 
\begin{equation}\label{eqn203ab77az}
\vec{u}_{}=- t \bar{\bar{S}}^t_{}\cdot(\vec{u}_{}\cdot\nabla_{})\vec{u}_{}
+\bar{\bar{E}}_{}\cdot\vec{u}_{}
+\bar{\bar{B}}_{}\cdot\vec{u}_{}^0.
\end{equation}
$ 0 <  \Delta t < \delta t  << 1$. Therefore  $ t  << 1$.

We have $\vec{u}^0_{} \in \overrightarrow{TS}$. Then $\bar{\bar{B}}_{}\cdot\vec{u}^0_{} \in \overrightarrow{TS}$  due to the properties of the operator $\bar{\bar{B}}_{}$ (see formulas \eqref{eqn164n7} - \eqref{eqn160n2n17b} and the basic properties of the Fourier transforms of the functions of the space S (section 3)).

Let us  assume that $\vec{u}_{} \in \overrightarrow{TS}$. Thereat  $(\vec{u}_{}\cdot\nabla_{})\vec{u}_{} \in \overrightarrow{TS}\;\;$ due to the properties of the space $\overrightarrow{TS}$. 

Then $\bar{\bar{S}}^t_{}\cdot(\vec{u}_{}\cdot\nabla_{})\vec{u}_{} \in \overrightarrow{TS},\;\;$ due to the properties of the operator $\bar{\bar{S}}^t_{}$ (see formulas \eqref{eqn164n37} - \eqref{eqn203tba} and the basic properties of the Fourier transforms of the functions of the space S (section 3)).

Furthermore $\bar{\bar{E}}_{}\cdot\vec{u}_{} \in \overrightarrow{TS},\;\;$due to the properties of the operator $\bar{\bar{E}}_{}$ (see formulas \eqref{eqn164n27} - \eqref{eqn160n4uo7} and the basic properties of the Fourier transforms of the functions of the space S (section 3)).

Owing to all this  it is evident that a solution of the equation \eqref{eqn203ab77} for any time $t \in [0, \delta t]$  is $\vec{u}_{} \in \overrightarrow{TS}$.

Q.E.D.
\end{proof}
\begin{theorem} \label{thm31ym}
There exists the unique solution $\vec{u}$
of the equation \eqref{eqn203ab77} in
the space $\overrightarrow{TS}$ for any time $t \in [0, \delta t]$ .
\end{theorem}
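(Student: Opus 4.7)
My plan is to prove Theorem 8.2 via the fixed-point principle highlighted in the paper's keywords. Define the nonlinear operator $T:\overrightarrow{TS}\to\overrightarrow{TS}$ by
\[
T(\vec{v}) := -t\,\bar{\bar{S}}^{t}\cdot(\vec{v}\cdot\nabla)\vec{v} + \bar{\bar{E}}\cdot\vec{v} + \bar{\bar{B}}\cdot\vec{u}^0,
\]
so that solving \eqref{eqn203ab77} is equivalent to finding a fixed point of $T$. Theorem 8.1 already guarantees that $T$ sends $\overrightarrow{TS}$ into itself, so existence and uniqueness will follow together from the Banach contraction principle applied on a suitable closed bounded subset $M\subset\overrightarrow{TS}$ equipped with the null norm $|\cdot|$ from \eqref{eqn203t317v}.

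First I would set $M:=\{\vec{v}\in\overrightarrow{TS}:|\vec{v}|\leq R\}$ for an $R$ slightly larger than $|\vec{u}^0|$ (say $R=2|\vec{u}^0|$) and verify $T(M)\subset M$. Applying the operator bounds of Theorems 7.1--7.3 gives
\[
|T(\vec{v})| \leq t\,|(\vec{v}\cdot\nabla)\vec{v}| + \epsilon\,|\vec{v}| + |\vec{u}^0|.
\]
Since $\vec{v}\in\overrightarrow{TS}$ implies $(\vec{v}\cdot\nabla)\vec{v}\in\overrightarrow{TS}$, its null norm is finite and can be bounded uniformly on $M$ by some $C(R)$. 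Shrinking $\delta t$ and $\epsilon$ (both at our disposal through the parameters $q_{3}$ and $q_{1}$) so that $\delta t\,C(R)+\epsilon R\leq R-|\vec{u}^0|$ secures $T(M)\subset M$.

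For the contraction step, I would take $\vec{v}_{1},\vec{v}_{2}\in M$ and split the nonlinearity via the algebraic identity
\[
(\vec{v}_{1}\cdot\nabla)\vec{v}_{1}-(\vec{v}_{2}\cdot\nabla)\vec{v}_{2}=((\vec{v}_{1}-\vec{v}_{2})\cdot\nabla)\vec{v}_{1}+(\vec{v}_{2}\cdot\nabla)(\vec{v}_{1}-\vec{v}_{2}).
\]
Combining this with boundedness of $\bar{\bar{S}}^{t}$ and $\bar{\bar{E}}$ and with the $M$-bounds on $\vec{v}_{1}$ and $\nabla\vec{v}_{2}$ yields a Lipschitz estimate of the form
\[
|T(\vec{v}_{1})-T(\vec{v}_{2})|\leq (L(R)\,t+\epsilon)\,|\vec{v}_{1}-\vec{v}_{2}|.
\]
Choosing $\delta t$ (and $\epsilon$) small enough that $L(R)\,\delta t+\epsilon<1$ makes $T$ a strict contraction on $M$ for every $t\in[0,\delta t]$. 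Banach's theorem then delivers a unique fixed point $\vec{u}\in M$, giving existence. Uniqueness in the full space $\overrightarrow{TS}$ follows because any other solution $\vec{u}'$ inherits the a priori bound $|\vec{u}'|\leq|\vec{u}^0|$ of Section 7 and hence lies in $M$, where the contraction forces $\vec{u}'=\vec{u}$.

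The main obstacle I anticipate is the estimate $|(\vec{v}\cdot\nabla)\vec{v}|\leq C(R)$ in the null norm: that norm controls only $\sup|\vec{v}|$, while the nonlinearity involves $\nabla\vec{v}$. Membership in $\overrightarrow{TS}$ does control every Schwartz seminorm $\|\vec{v}\|_{p}$, but the contraction argument really has to be organized in a seminorm of order $p\geq 1$ (or an equivalent Banach subspace), and one has to verify completeness of the corresponding closed ball together with preservation of the higher-order bounds by all three operators $\bar{\bar{S}}^{t},\bar{\bar{E}},\bar{\bar{B}}$. Supplementing the paper's informal null-norm framework with this extra regularity control is the delicate technical step where care is needed.
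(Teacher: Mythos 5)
Your fixed-point strategy is the standard one and differs from the paper's argument, which instead forms the difference $\Delta\vec{u}=\vec{u}-\vec{u}'$ of two putative solutions and re-runs the a priori estimate of Section 7 with zero initial data to conclude $|\Delta\vec{u}|\leq|\Delta\vec{u}^0|=0$. However, your proposal has a genuine gap, and it is exactly the one you flag at the end without resolving: the contraction estimate cannot be closed in the null norm $|\cdot|=\|\cdot\|_0$, nor in any fixed finite-order seminorm $\|\cdot\|_p$. Concretely, the self-map step needs $|(\vec{v}\cdot\nabla)\vec{v}|\leq C(R)$ uniformly over the ball $M=\{|\vec{v}|\leq R\}$, but a bound on $\sup|\vec{v}|$ gives no bound whatsoever on $\sup|\nabla\vec{v}|$, so no such uniform $C(R)$ exists. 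Worse, in the Lipschitz step the term $(\vec{v}_2\cdot\nabla)(\vec{v}_1-\vec{v}_2)$ would have to be dominated by $L(R)\,|\vec{v}_1-\vec{v}_2|$, i.e.\ the sup norm of a gradient by the sup norm of the function itself, which is false: two elements of $M$ can be uniformly close while their gradients are far apart. Passing to $\|\cdot\|_p$ with $p\geq 1$ does not repair this, because estimating $\|(\vec{v}_2\cdot\nabla)(\vec{v}_1-\vec{v}_2)\|_p$ requires $\|\vec{v}_1-\vec{v}_2\|_{p+1}$, so the loss of one derivative reappears at every order. To close such an argument one must either exploit the smoothing of the factor $e^{-\nu(\gamma_1^2+\gamma_2^2+\gamma_3^2)(t-\tau)}$ inside $\bar{\bar{S}}^{t}$ to regain the derivative at the cost of an integrable singularity $(t-\tau)^{-1/2}$ (unavailable when $\nu=0$), or run a Nash--Moser or abstract Cauchy--Kovalevskaya scheme on a scale of spaces; neither appears in your outline.

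A second, independent problem is completeness: $S$ equipped with $\|\cdot\|_0$ (or any single seminorm $\|\cdot\|_p$) is not a Banach space, so the closed ball $M\subset\overrightarrow{TS}$ is not complete and Banach's principle does not apply as stated --- the fixed point produced by iterating $T$ would a priori live only in the completion (a space of continuous functions vanishing at infinity), not in $\overrightarrow{TS}$. Finally, your last sentence, which extends uniqueness from $M$ to all of $\overrightarrow{TS}$ via the a priori bound $|\vec{u}'|\leq|\vec{u}^0|$, quietly imports the Section 7 estimate, which was itself derived only under the perturbative assumptions \eqref{eqn203f1}--\eqref{eqn203fa1}; this is essentially the same unestablished step on which the paper's own difference-of-solutions proof relies. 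So while your route is the more conventional one, as written it does not yet constitute a proof.
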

\begin{proof}
We rewrite the integral equation \eqref{eqn203ab77} for $\;t \in [0,\delta t]. $ 
\begin{equation}\label{eqn203ab77a}
\vec{u}_{}=- t \bar{\bar{S}}^t_{}\cdot(\vec{u}_{}\cdot\nabla_{})\vec{u}_{}
+\bar{\bar{E}}_{}\cdot\vec{u}_{}
+\bar{\bar{B}}_{}\cdot\vec{u}_{}^0.
\end{equation}
$ 0 <  \Delta t < \delta t  << 1$. Therefore  $ t  << 1$.

Let us  assume that the opposite is true.
Then there exist $\vec{u}_{},\vec{u}'_{} \in \overrightarrow{TS}$ are different solutions of the equation \eqref{eqn203ab77a}. 

We introduce
\[\Delta \vec{u}_{} = \vec{u}_{} - \vec{u}'_{}\]
where $\Delta \vec{u}_{} \in \overrightarrow{TS}.$ Obviously $\Delta \vec{u}^0_{} = 0.$ Here  $\Delta \vec{u}^0_{}$ is an initial velocity for this case.

Further we repeat the calculation \eqref{eqn203t5} - \eqref{eqn203t3a} in this case for any time $t \in [0, \delta t]$ and receive an inequality, analogous \eqref{eqn203t3a}.
\begin{equation}\label{eqn203t3zg}
|\Delta\vec{u}_{}|  \leq  |\Delta\vec{u}^0_{}| = 0.
\end{equation}
Therefore 
\begin{equation}\label{eqn203t3zgk}
|\Delta\vec{u}_{}| = 0.  
\end{equation}
Thus, there exists a unique solution $\vec{u}_{} \in \overrightarrow{TS}$ of the equation \eqref{eqn203ab77} for $\;t \in [0,\delta t]. $ 

Q.E.D.
\end{proof}
Then vector-function $\nabla p \in \overrightarrow{TS}$
is defined by \eqref{eqn1} where vector-function $\vec{u}$ is received
from equation \eqref{eqn203ab77}. Function $p$ is defined up to an arbitrary
constant.

Further, repeating the arguments of the Cauchy problem solution for the Navier Stokes equations \eqref{eqn203t5} - \eqref{eqn203t3zgk} with initial time $t = \delta t$ instead of $t = 0$ and the initial velocity $\vec{u}|_{t=\delta t}$ instead of $\vec{u}^0$, we again obtain an estimate of velocity $\vec{u}$ for the next small interval of time $\delta t$ (For example $\delta t = \text{e}^{-q_3}, \;\;\;q_3 = 2, 3,4,...\;\;\;q_3 < \infty$.)
and then the solution $\vec{u}$ for this interval of time $\delta t$. These arguments and equations  (\eqref{eqn203t5} - \eqref{eqn203t3zgk}) can be repeated arbitrarily long.  Availability $\Delta t$  leads to the fact that the process described by equations (\eqref{eqn203t5} - \eqref{eqn203t3zgk}) continue for $t \rightarrow \infty$. 
\begin{remark} \rm
From the above statements, it follows that 
there exists the unique set of smooth functions
  $u_{\infty i}(x, t)$, $p_{\infty}(x, t)$  $(i = 1, 2, 3)$
$\mathbb{R}^3 \times [0,\infty)$ that satisfies \eqref{eqn1}, \eqref{eqn2},
\eqref{eqn3} and
\begin{equation}\label{eqn186b}
u_{\infty i},p_{\infty} \in  C^{\infty}(\mathbb{R}^3 \times [0,\infty)),
\end{equation}
Then, using the inequality $\|{\vec{u}}\|_{L_2}\leq \|{\vec{u^0}}\|_{L_2}$
from  \cite{oL69}, \cite{LK63}, we have
\begin{equation}\label{eqn186c}
\int_{\mathbb{R}^3}|\vec{u}_{\infty}(x, t)|^2dx < C,\quad \forall t\geq 0.
\end{equation}
Let us consider $\nu \to$ 0.
Then we see that inequalities  \eqref{eqn160n2n1gh7}, \eqref{eqn160n2n1gh7c}  are correct also in case 
of Euler equations;
 i.e., there exists unique smooth solution in all time range for this case.
\end{remark}

Hence, we can see that when velocity ${\vec{u}^0}\in \overrightarrow{TS}$, the fluid flow is laminar.
Turbulent flow may occur when velocity ${\vec{u}^0}\notin\overrightarrow{TS}$.

\section{Appendix}

The Fourier integral can be stated in the forms:
\begin{equation}\label{A3}
\begin{gathered}
\begin{aligned}
U( \gamma_1 , \gamma_2 , \gamma_3)
&=F[ u(x_1 , x_2 , x_3)]\\
&= \frac{1}{(2\pi)^{3/2}}
\int_{-\infty}^{\infty} \int_{-\infty}^{\infty}
\int_{-\infty}^{\infty} u( x_1 , x_2 , x_3)
e  ^{ i( \gamma_1 x_1 + \gamma_2 x_2 + \gamma_3 x_3) } dx_1 dx_2 dx_3
\end{aligned}\\
u( x_1 , x_2 , x_3)= \frac{1}{(2\pi)^{3/2}} \int_{-\infty}^{\infty} \int_{-\infty}^{\infty} \int_{-\infty}^{\infty} U( \gamma_1 , \gamma_2,\gamma_3  )\, e  ^{- i( \gamma_1 x_1 + \gamma_2 x_2 + \gamma_3 x_3) } d\gamma_1 d\gamma_2 d\gamma_3
\end{gathered}
\end{equation}
The Laplace integral is usually stated in the form
\begin{equation}\label{A4}
U^{\otimes}(\eta)=L[u(t)]= \int_0^{\infty}u(t) e  ^{-\eta t}dt
\quad u(t)=\frac{1}{2\pi i}\int_{c- i \infty }^{c + i \infty} U^{\otimes}(\eta)
 e  ^{\eta t}d\eta \quad c > c_0.
\end{equation}
Then
\begin{equation}\label{A5}
L[u'(t)]=\eta U^{\otimes}(\eta)-u(0).
\end{equation}
The convolution theorem \cite{DP65,DW46} is stated as:
If integrals
\[
U_1^{\otimes}(\eta)= \int_0^{\infty}u_1(t)
e  ^{-\eta t}d\,t  \quad
 U_2^{\otimes}(\eta)= \int_0^{\infty}u_2(t) e  ^{-\eta t}d\,t
\]
converge absolutely for $\operatorname{Re} \eta > \sigma_{d}$,
 then  $U^{\otimes}(\eta)=U_1^{\otimes}(\eta) U_2^{\otimes}(\eta)$
is Laplace transform of
\begin{equation}\label{A6}
u(t)=\int_0^tu_1(t-\tau)\,u_2(\tau)\,d\,\tau
\end{equation}
A useful Laplace integral is
\begin{equation}\label{A7}
L[e  ^{\eta_kt}]=\int_0^{\infty}e  ^{-(\eta-\eta_k)\,t}d\,t
= \frac{1}{(\eta-\eta_k)}\quad \operatorname{Re}\eta >\eta_k
\end{equation}

\subsection*{Acknowledgments}
 We express our sincere gratitude to Professor L. Nirenberg, at whose suggestion this study was carried out, 
and to Professor Ya.G. Sinai, who found that our article is extremely interesting. 
We are also very thankful to Professor A. B. Gorstko for helpful friendly discussions.

\end{document}